\newtheorem{theorem}{Theorem}
\newtheorem{lemma}{Lemma}
\theoremstyle{definition}
\newtheorem{remark}{Remark}
\newtheorem*{remark*}{Remark}
\newtheorem*{assumption*}{Assumption}
\newtheorem*{assumptions*}{Assumptions}
\newcommand{\indic}{\mathds 1}
\renewcommand{\P}{\mathbb{P}}
\newcommand{\E}{\mathbb{E}}
\newcommand{\dd}{{\rm d}}
\newcommand{\dx}{\dd x}
\newcommand{\dy}{\dd y}
\renewcommand{\tilde}{\widetilde}
\renewcommand{\t}{\theta}
\renewcommand{\l}{\lambda}
\newcommand{\Fbar}{\overline F}
\newcommand{\FIbar}{\overline F_I}
\newcommand{\FItilbar}{\overline {\tilde F_I}}
\newcommand{\psitil}{\tilde\psi}
\newcommand{\Ttil}{\tilde T}
\newcommand{\ttil}{\tilde\t}
\newcommand{\ltil}{\tilde\l}
\newcommand{\ctil}{\tilde c}
\newcommand{\mtil}{\tilde m}
\newcommand{\Ftil}{\tilde F}
\newcommand{\fItil}{\tilde f_I}
\newcommand{\FItil}{\tilde F_I}
\newcommand{\Util}{\tilde U}
\newcommand{\Mtil}{\tilde M}
\newcommand{\Ztil}{\tilde Z}
\newcommand{\Rtil}{\tilde R}
\renewcommand{\O}{\mathcal O}
\newcommand{\aux}[1]{#1^\textrm{aux}}
\newcommand{\DV}[1]{#1^\textrm{DV}}
\newcommand{\psiDV}{\DV{\psi}}
\newcommand{\psiaux}{\aux{\psi}}
\newcommand{\psiCL}{\psi^\textrm{CL}}
\newcommand{\GRbar}[1]{\overline G_{#1}}
\newcommand{\mexp}{m^\textrm{exp}_k}
\newcommand{\RDV}{\DV{R}}
\newcommand{\mDV}{\DV{m}}
\newcommand{\tDV}{\DV{\t}}
\newcommand{\dDV}{\DV{d}}
\newcommand{\ttDV}{\DV{t}}
\newcommand{\lDV}{\DV{\lambda}}
\newcommand{\sDV}{\DV{s}}
\renewcommand{\cite}{\textcite}
\begin{document}

\title{Error estimates for De Vylder type approximations in ruin theory}


\date{\today}

\author{Azmi Makhlouf\footnote{Universit\'e de Tunis El Manar (UTM), Ecole Nationale d'Ing\'enieurs de Tunis (ENIT), LR99ES20 Laboratoire de Mod\'elisation Math\'ematique et Num\'erique dans les Sciences de l'Ing\'enieur (LAMSIN), B.P. 37, 1002 Tunis, Tunisia. E-mail: azmi.makhlouf@enit.utm.tn}}
\maketitle

\begin{abstract}
Due to its practical use, De Vylder's approximation of the ruin probability has been one of the most popular approximations in ruin theory and its application to insurance. Surprisingly, only heuristic and numerical evidence has supported it, to some extent. Finding a mathematical estimate for its accuracy has remained an open problem, going from the original paper by De Vylder (1978) through an attempt of justification by Grandell (2000).\\ 
The present paper consists of a mathematical and critical treatment of the problem. We more generally consider De Vylder type approximations of any order $k$, based on fitting the $k$ first moments of the classical risk reserve process. Moreover, we not only deal with the ruin probability, but also with the moments of the time of ruin, of the deficit at ruin and of the surplus before ruin.\\
We estimate the approximation errors in terms of the safety loading coefficient, the initial reserve and the approximation order. We show their different behaviours, and the extent to which each relative error remains small or blows up, so that one has to be careful when using this approximation. Our estimates are confirmed by numerical examples.\\
Besides, it turns out that De Vylder type approximations become paradoxically inaccurate when applied to the moments of the deficit at ruin and of the surplus before ruin.
\end{abstract}
\textbf{Keywords}: Ruin theory, De Vylder approximation; Error estimates; Ruin probability; Time of ruin; Deficit; Surplus\\

\section{Introduction}

One of the major interests of both the actuarial theory and practice is the ruin event, that is when an insurer's risk reserve, subject to random claims arrivals, becomes negative.\\
According to the classical compound Poisson model, the reserve at time $t$ is given by
\begin{equation}\label{eq:reserve}
U_t=U_t(u)=u+ct-\sum_{i=1}^{N_t}Z_i,
\end{equation}
where $u=U_0\geq 0$ is the initial reserve, $c>0$ the premium per unit time, $(N_t)_{t\geq 0}$ (number of claims until time $t$) a Poisson process of intensity $\l$, and $(Z_i)_{i\geq 1}$ positive independent random variables (costs of the claims).\\
Among the most important concepts that are related to the ruin event (and that may be viewed as risk indicators), we mention the ruin probability, the time of ruin, the deficit at ruin and the surplus before ruin.\\
Now, although the basic statistical properties of the above quantities are known to satisfy some renewal equations (see \cite{lin:will:99} and \cite{lin:will:00}), the solution is not explicit in general, and numerical approximations are usually needed.\\
Several approximations for the ruin probability have been proposed (see \cite{asmu:albr:10} and \cite{cize:hard:wero:11}, for instance). For obvious practical reasons, much attention has been paid to so-called simple approximations, that are fully explicit and that only use few moments of the claim distribution. For a detailed account of such approximations, we refer the reader to \cite{gran:00}.\\
In the present paper, we are interested in De Vylder's approximation (\cite{de:vyld:78}), which has been one of the most popular simple approximations.\\ 
De Vylder's original idea is to match the three first moments of the risk reserve $(U_t)$ with those of a risk reserve $(\Util_t)$ where the claims are exponentially distributed, taking advantage of the explicit expression of the ruin probability in this case.\\
\cite{de:vyld:78} only gave numerical results, that surprisingly showed the efficiency of this approximation with several examples of light-tailed claim distributions, on some range of the parameters.\\
Later, \cite{gran:00} suggested a possible mathematical explanation, by considering some parameters of the model. Indeed, he showed that De Vylder's method approximates the adjustment coefficient (Lundberg exponent) at a cubic rate with respect to the safety loading coefficient, which is often small (smaller than 1 in practice). However, the author mentioned that he did not manage to derive an estimate for the approximation error of the ruin probability, which has (to the best of our knowledge) remained an open problem.\\
De Vylder's method has been extended in different directions.\\
\cite{dick:wong:04} applied it to approximate the moments and density of the time of ruin.\\
\cite{burn:mist:wero:05} proposed a four-moment Gamma approximation of the ruin probability, instead of the original three-moment exponential one, and numerically observed an improvement of the accuracy.\\
In this paper, we more generally consider matching the $k$ first moments, with any order $k\geq 2$. Besides, we need neither explicit expressions for the approximating ruin probability nor for the parameters of the approximating model (we will rather rely on their particular structure given by Lemma \ref{lem:U:Util}). Moreover, we also deal with De Vylder type approximations of the moments of the time of ruin, the deficit at ruin and the surplus before ruin.\\
We aim at mathematically estimating the approximation errors, with respect to the safety loading coefficient, the initial reserve and the approximation order $k$.
We state upper bounds for the relative errors, expressing the rate at which they are either small or large.\\
Numerical examples illustrate our estimates. We point out how the errors may blow-up (even when the parameters are of practical interest), and that De Vylder type approximations applied to the moments of the deficit at ruin and of the surplus before ruin are surprisingly inaccurate.\\
The paper is organized as follows. In Section \ref{sec:nota:assump}, we introduce additional notations and assumptions. In Section \ref{sec:key:lemmas}, we establish some key lemmas. In Sections \ref{sec:err:proba}, \ref{sec:err:time}, \ref{sec:err:deficit} and \ref{sec:err:surplus}, we state and prove our main results (given by Theorems \ref{thm:DV:err:proba}, \ref{thm:DV:err:time}, \ref{thm:DV:err:deficit} and \ref{thm:DV:err:surplus}). Finally, we conclude in Section \ref{sec:concl}.

\section{Notations and assumptions}\label{sec:nota:assump}
The costs of the claims $(Z_i)_{i\geq 1}$ are positive independent random variables with the same cumulative distribution function $F$ of unbounded support. We denote the $j^{\rm th}$ moment of the claim cost by
$$m_j:=\E(Z_1^j).$$
For every cumulative distribution function $\Phi$, the survival function $1-\Phi$ is denoted by $\overline \Phi$.\\
The integrated tail distribution function is defined by
$$F_I(x)=\frac{1}{m_1}\int_0^x\Fbar(y)\dy,$$
and the associated density function is
$$f_I(x)=\frac{\Fbar(x)}{m_1}.$$
The time of ruin is
$$T=T(u):=\inf\{t\geq 0: U_t<0\},$$
and the ruin probability is $$\psi(u):=\P(T(u)<\infty).$$
Its derivative (with respect to $u>0$) is denoted by $\psi'(u)$.\\
The safety loading coefficient is defined by
$$\t:=\frac{c}{\lambda m_1}-1,$$
which is usually supposed to be positive in order to avoid an almost sure ruin. Let us mention here that the dependence of $\psi(u)$ on $\t$ is just made implicit.\\
Our study lies within the framework of light-tailed claim distributions. We shall use the following assumptions. 
\begin{assumptions*}
\ \vspace{0.2em}
\begin{enumerate}[(\textrm{A}1)]
\item\label{assump:CL} There exists $R>0$ (the adjustment coefficient, that depends on $\t$) such that
\begin{equation}\label{eq:def:R}
\int_0^\infty\exp(Rx)\frac{f_I(x)}{1+\t}\dx=1.
\end{equation}
\item\label{assump:x:exp}
\begin{equation*}
	\mexp:=\int_0^{\infty}x^k\exp(Rx)\frac{f_I(x)}{1+\t}\dx<\infty.
\end{equation*}
\item\label{assump:hazard} The hazard rate function associated to the tail distribution is bounded:
$$h_I:=\sup_{x\geq 0}\frac{f_I(x)}{\FIbar(x)}<\infty.$$
\end{enumerate}
\end{assumptions*}
Assumption (A\ref{assump:CL}) is the usual Cramer-Lundberg condition. We will need Assumption (A\ref{assump:x:exp}) when considering the approximation errors. We shall use assumption (A\ref{assump:hazard}) only for estimating the derivative of the ruin probability (proof of Lemma \ref{lem:psi:psitil}). It is satisfied by many classic claim distributions including exponential, mixture of exponentials, Gamma, and inverse Gaussian distributions (see \cite{klup:89}).\\

For any functions $a$ and $b$, we denote the convolution operation by 
\begin{equation*}
a*b(u):=\int_0^u a(u-x)b(x)\dx.
\end{equation*}
Throughout the paper, the notation
\begin{equation*}
a(u,\t)=\O\left(b(u,\t)\right) \textrm{ (or } a=\O(b)\textrm{)}
\end{equation*}
means that there exists a non-negative constant $C$, that does not depend on $(u,\t)$ (but may depend on the $m_j$'s, $\mexp$ and $h_I$), such that $|a(u,\t)|\leq C|b(u,\t)|$ for all $u\geq 0$ and $\t>0$, with $\t$ lying in a bounded interval (in practice, $\t$ is usually smaller than unity).\\
The notation $x\wedge y$ stands for $\min(x,y)$; $x\vee y$ for $\max(x,y)$, and $\indic_{x>y}$ stands for 1 if $x>y$ and 0 otherwise.\\
If $i_1>i_2$, $\sum_{i=i_1}^{i_2}a_i:=0$.\\
For any quantity $X$ and an approximation $X^\textrm{approx}$ of it, the corresponding relative error is defined by
$$\varepsilon^\textrm{approx}:=\left\vert\frac{X-X^\textrm{approx}}{X}\right\vert.$$

\section{Key lemmas}\label{sec:key:lemmas}

In this section, we state and prove some estimates that we will need for the proofs of our main results. Besides, they have an interest for their own.

\subsection{Results for general approximations}

We consider a second compound Poisson reserve model (that stands for any approximating model)
\begin{equation}\label{eq:reserve:tilde}
\Util_t=\Util_t(u)=u+\tilde ct-\sum_{i=1}^{\tilde N_t}\tilde Z_i,
\end{equation}
with parameters $(\ctil, \ltil, \mtil_j, \ttil, \Ftil, \FItil, \fItil, \Rtil, \psitil, \Ttil)$ instead of $(c, \l,  m_j,\t, F, F_I, f_I,R,\psi,T)$.\\
In the following lemma, we give an estimate of the difference between ruin probabilities, with no mention of any particular approximation procedure. 
\begin{lemma}\label{lem:psi:psitil}
Let $\psi(u)$ and $\psitil(u)$ be the ruin probabilities associated to models \eqref{eq:reserve} and \eqref{eq:reserve:tilde} respectively, under assumptions (A\ref{assump:CL}) and (A\ref{assump:hazard}) for both models. Set 
\begin{equation*}
\Delta(u):=\psi(u)-\psitil(u)
\end{equation*}
and
\begin{equation*}
\delta(u):=\frac{\FIbar(u)}{1+\t}-\frac{\FItilbar(u)}{1+\ttil}+\psitil*(\frac{f_I}{1+\t}-\frac{\fItil}{1+\ttil})(u).
\end{equation*}
Then, for all $u\geq 0$,
\begin{equation}
\vert\Delta(u)\vert \leq \left(\vert\delta(u)\vert\exp(Ru)+h_I\int_0^u\vert\delta(x)\vert\exp(Rx)\dx\right)\exp(-Ru).
\end{equation}
\end{lemma}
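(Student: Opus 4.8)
The plan is to turn the problem into a renewal equation for $\Delta$ and then resolve it by exponential tilting. First I would recall the Pollaczek--Khinchine defective renewal equation satisfied by each ruin probability:
\begin{equation*}
\psi(u)=\frac{\FIbar(u)}{1+\t}+\frac{1}{1+\t}\,\psi*f_I(u),\qquad \psitil(u)=\frac{\FItilbar(u)}{1+\ttil}+\frac{1}{1+\ttil}\,\psitil*\fItil(u).
\end{equation*}
Subtracting these and writing $\psi=\psitil+\Delta$ inside the first convolution, the term $\frac{1}{1+\t}\psi*f_I$ splits as $\frac{1}{1+\t}\psitil*f_I+\frac{1}{1+\t}\Delta*f_I$; the pieces not involving $\Delta$ reorganise exactly into $\delta(u)$, leaving the renewal equation
\begin{equation*}
\Delta(u)=\delta(u)+\frac{1}{1+\t}\,\Delta*f_I(u).
\end{equation*}

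Next I would tilt by the adjustment coefficient. Setting $g_R(x):=\exp(Rx)f_I(x)/(1+\t)$, Assumption (A\ref{assump:CL}) says precisely that $g_R$ is a genuine probability density; let $G_R$ be its distribution function. Multiplying the renewal equation by $\exp(Ru)$ and putting $\hat\Delta(u):=\exp(Ru)\Delta(u)$ and $\hat\delta(u):=\exp(Ru)\delta(u)$ converts $\frac{1}{1+\t}\Delta*f_I$ into a convolution against $g_R$, giving the \emph{proper} renewal equation $\hat\Delta=\hat\delta+\hat\Delta*g_R$. Its solution is $\hat\Delta=\hat\delta+\hat\delta*u_R$, where $u_R:=\sum_{n\geq1}g_R^{*n}$ is the associated renewal density; I would justify this either by standard renewal theory or by iterating the equation and checking that the remainder $\hat\Delta*g_R^{*(n+1)}$ vanishes on $[0,u]$, using that $\hat\Delta$ is bounded there (since $|\Delta|\leq1$) while $\int_0^u g_R^{*(n+1)}\to0$.

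The crux, and the step where (A\ref{assump:hazard}) enters, is the uniform bound $u_R(x)\leq h_I$. I would first show that the tilted law inherits a bounded hazard rate: because $\exp(Ry)\geq\exp(Rx)$ for $y\geq x$, one has $\overline{G_R}(x)\geq \exp(Rx)\FIbar(x)/(1+\t)$, whence $g_R(x)/\overline{G_R}(x)\leq f_I(x)/\FIbar(x)\leq h_I$. A bounded hazard rate then forces a bounded renewal density, by induction on the partial sums $u_R^{(n)}:=\sum_{k=1}^n g_R^{*k}$: assuming $u_R^{(n-1)}\leq h_I$,
\begin{equation*}
u_R^{(n)}(x)=g_R(x)+\int_0^x g_R(x-y)\,u_R^{(n-1)}(y)\dy\leq g_R(x)+h_I\,G_R(x)\leq h_I,
\end{equation*}
the last inequality since $g_R(x)+h_I G_R(x)=\frac{g_R(x)}{\overline{G_R}(x)}\overline{G_R}(x)+h_I G_R(x)\leq h_I\overline{G_R}(x)+h_I G_R(x)=h_I$; letting $n\to\infty$ gives $u_R\leq h_I$ by monotone convergence.

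Finally, taking absolute values in $\hat\Delta=\hat\delta+\hat\delta*u_R$ and using $u_R\leq h_I$ yields $|\hat\Delta(u)|\leq|\hat\delta(u)|+h_I\int_0^u|\hat\delta(x)|\dx$; substituting back $\hat\Delta=\exp(Ru)\Delta$, $\hat\delta=\exp(R\cdot)\delta$ and multiplying through by $\exp(-Ru)$ reproduces the claimed bound. I expect the renewal-density estimate $u_R\leq h_I$ to be the main obstacle: the exponential tilting is what simultaneously makes the kernel proper (so $u_R$ is well defined) and preserves the hazard-rate bound, with the monotonicity $\exp(Ry)\geq\exp(Rx)$ the quiet but essential ingredient.
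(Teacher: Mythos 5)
Your proof is correct, and it resolves the key estimate by a genuinely different route from the paper's. Both arguments begin identically: subtracting the two Pollaczek--Khinchine equations gives the defective renewal equation $\Delta=\delta+\frac{1}{1+\theta}\Delta*f_I$. The paper then writes the solution in closed form as $\Delta=\delta-\frac{1+\theta}{\theta}\psi'*\delta$ (verified via the renewal equation for $\psi'$ and uniqueness) and imports the bound $-\psi'(x)\leq \frac{h_I\theta}{(1+\theta)^2}\exp(-Rx)$ from Willmot, Cai and Lin; assumption (A3) enters only through that citation. You instead tilt by $R$ to make the kernel proper, represent $\hat\Delta=\hat\delta+\hat\delta*u_R$ through the renewal density $u_R=\sum_{n\geq1}g_R^{*n}$, and prove the needed bound $u_R\leq h_I$ from scratch, via the observation that tilting preserves the hazard-rate bound (since $\overline{G_R}(x)\geq \exp(Rx)\FIbar(x)/(1+\theta)$, so $g_R/\overline{G_R}\leq f_I/\FIbar\leq h_I$) followed by the induction $u_R^{(n)}\leq g_R+h_IG_R\leq h_I$. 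The two routes are close cousins: since $\exp(Ru)\frac{1+\theta}{\theta}(-\psi'(u))=\sum_{n\geq 1}g_R^{*n}(u)$, your $u_R$ is exactly the tilted version of the paper's kernel $\frac{1+\theta}{\theta}(-\psi')$, and your estimate $u_R\leq h_I$ is (up to a harmless factor $\frac{1}{1+\theta}$ that the paper also discards) equivalent to the cited bound on $-\psi'$. What your version buys is self-containedness --- the hazard-rate-controls-renewal-density mechanism is made explicit rather than outsourced --- at the modest cost of having to justify the series solution of the proper renewal equation, which you do correctly by noting $|\Delta|\leq 1$ and letting the remainder $\hat\Delta*g_R^{*(n+1)}$ vanish. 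Both arguments land on the identical final inequality.
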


\begin{proof}
First, we prove that
\begin{equation}\label{eq:Delta:psiprim}
\Delta(u)=\delta(u)-\frac{1+\t}{\t}\psi'*\delta(u).
\end{equation}
It is well known (see, for example, \cite{fell:66} or \cite{gerb:79}) that $\psi$ solves the renewal equation
\begin{equation}\label{eq:psi}
\psi(u)=\frac{\FIbar(u)}{1+\t}+\frac{1}{1+\t}\psi*f_I(u).
\end{equation}
Similarly,
\begin{equation*}
\psitil(u)=\frac{\FItilbar(u)}{1+\ttil}+\frac{1}{1+\ttil}\psitil*\fItil(u).
\end{equation*}
Thus,
\begin{equation}\label{eq:Delta:renewal}
\Delta(u)=\delta(u)+\frac{1}{1+\t}\Delta*f_I(u).
\end{equation}
This is a renewal equation in $\Delta$, and actually, \eqref{eq:Delta:psiprim} follows from \cite[Theorem 2.1]{lin:will:99}, which is based on the use of a compound geometric series and Laplace transform. For the convenience of the reader, we give a more simple argument.\\
By uniqueness of the solution of the renewal equation \eqref{eq:Delta:renewal}, it is sufficient to check that $\Delta$ given by \eqref{eq:Delta:psiprim} solves it. Set
$$\Delta_{\eqref{eq:Delta:psiprim}}:=\delta-\frac{1+\t}{\t}\psi'*\delta.$$
First, it is known that $\psi$ is differentiable on $(0,\infty)$ and that (from \eqref{eq:psi})
\begin{align}
\psi'(u)&=-\frac{f_I(u)}{1+\t}+\frac{1}{1+\t}\psi(0)f_I(u)+\frac{1}{1+\t}\psi'*f_I(u)\nonumber\\
&=-\frac{\t}{(1+\t)^2}f_I(u)+\frac{1}{1+\t}\psi'*f_I(u).\label{eq:psiprim}
\end{align}
Then,
\begin{align*}
\Delta_{\eqref{eq:Delta:psiprim}}&=\delta-\frac{1+\t}{\t}\delta*\left(-\frac{\t}{(1+\t)^2}f_I+\frac{1}{1+\t}\psi'*f_I\right)\\
&=\delta+\frac{1}{1+\t}\delta*f_I-\frac{1}{\t}\delta*\psi'*f_I\\
&=\delta+\frac{1}{1+\t}(\delta-\frac{1+\t}{\t}\delta*\psi')*f_I\\
&=\delta+\frac{1}{1+\t}\Delta_{\eqref{eq:Delta:psiprim}}*f_I.
\end{align*}
Thus $\Delta=\Delta_{\eqref{eq:Delta:psiprim}}$.
We then deduce Lemma \ref{lem:psi:psitil} using the following estimate of the derivative of the ruin probability, which solves renewal equation \eqref{eq:psiprim} (see \cite[Corollary 3.1 and Example 5.3]{will:cai:lin:01}):
$$\vert\psi'(x)\vert=-\psi'(x)
\leq \frac{h_I\t}{(1+\t)^2}\exp(-Rx).$$
\end{proof}

The following lemma gives the order of Lundberg type approximation of the ruin probability with respect to the safety loading coefficient $\t>0$ and for all $u\geq 0$.

\begin{lemma}\label{lem:psi:exp:err}
	Suppose that the reserve model $(U_t)$ satisfies assumptions (A\ref{assump:CL}), (A\ref{assump:x:exp}) and (A\ref{assump:hazard}).\\
	Then, 
	\begin{equation*}
	\psi(u)-\exp(-Ru)=\O(\t)\exp(-Ru).
	\end{equation*}
\end{lemma}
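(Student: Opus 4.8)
The plan is to compare $\psi$ with $\exp(-Ru)$ through a renewal equation of exactly the same type as the one treated in Lemma \ref{lem:psi:psitil}, and then to recycle the estimate obtained there. Writing $D(u):=\psi(u)-\exp(-Ru)$ and substituting $\psi=\exp(-R\,\cdot\,)+D$ into the renewal equation \eqref{eq:psi}, the exponential part can be handled explicitly: since $\frac{1}{1+\t}\int_0^u\exp(-R(u-x))f_I(x)\dx=\exp(-Ru)\bigl(1-\frac{1}{1+\t}\int_u^\infty\exp(Rx)f_I(x)\dx\bigr)$ by the Cram\'er--Lundberg normalisation (A\ref{assump:CL}), the $\exp(-Ru)$ terms cancel and $D$ is seen to solve $D=\gamma+\frac{1}{1+\t}D*f_I$, where $\gamma(u)=\frac{1}{1+\t}\int_u^\infty f_I(y)\bigl(1-\exp(R(y-u))\bigr)\dy$. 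In particular $\gamma\le 0$, consistently with Lundberg's inequality $\psi\le\exp(-Ru)$.

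This is precisely renewal equation \eqref{eq:Delta:renewal} with $\gamma$ in the role of $\delta$; hence the representation \eqref{eq:Delta:psiprim} and the bound on $\psi'$ invoked in the proof of Lemma \ref{lem:psi:psitil} apply with $\gamma$ in place of $\delta$ and give $\vert D(u)\vert\le\bigl(\vert\gamma(u)\vert\exp(Ru)+h_I\int_0^u\vert\gamma(x)\vert\exp(Rx)\dx\bigr)\exp(-Ru)$. It therefore remains to show that both $\vert\gamma(u)\vert\exp(Ru)$ and $\int_0^\infty\vert\gamma(x)\vert\exp(Rx)\dx$ are $\O(\t)$, uniformly in $u$.

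For the first quantity I would use the elementary inequality $\exp(R(y-u))-1\le R(y-u)\exp(R(y-u))$ to get $\vert\gamma(u)\vert\exp(Ru)\le R\int_u^\infty(y-u)\exp(Ry)f_I(y)\dy\le R(1+\t)m^\textrm{exp}_1=\O(\t)$, once $R=\O(\t)$ is known. For the integral term a pointwise bound is useless, since it would only yield $\O(\t u)$, which blows up; the whole difficulty is to exploit a cancellation. Swapping the order of integration evaluates it exactly as $\int_0^\infty\vert\gamma(x)\vert\exp(Rx)\dx=m^\textrm{exp}_1-\frac{\t}{R(1+\t)}$, and one integration by parts rewrites this as $\frac{R}{1+\t}\int_0^\infty x\exp(Rx)\FIbar(x)\dx$, which is $\le\frac{R}{2}m^\textrm{exp}_2=\O(\t)$ using $\int_0^y x\exp(Rx)\dx\le\tfrac12 y^2\exp(Ry)$ together with Assumption (A\ref{assump:x:exp}).

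The remaining and genuinely quantitative ingredient is the two-sided estimate $R\asymp\t$. It follows from (A\ref{assump:CL}) written as $\int_0^\infty(\exp(Rx)-1)f_I(x)\dx=\t$: the lower bound $\exp(Rx)-1\ge Rx$ gives $R\le 2m_1\t/m_2=\O(\t)$, while $\exp(Rx)-1\le Rx\exp(Rx)$ gives $R\ge\t/\bigl((1+\t)m^\textrm{exp}_1\bigr)$, i.e. $\t/R=\O(1)$. Combined with the boundedness of the tilted moments of order at most $2$ for $\t$ in a bounded interval (guaranteed by (A\ref{assump:x:exp})), this closes the estimate and yields $\vert D(u)\vert=\O(\t)\exp(-Ru)$. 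The main obstacle is exactly the uniform control of the integral term: the cancellation hidden in $m^\textrm{exp}_1-\t/(R(1+\t))$ is what prevents the relative error from accumulating with $u$.
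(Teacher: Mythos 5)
Your proof is correct, and it rests on the same two pillars as the paper's — the renewal-equation machinery of Lemma \ref{lem:psi:psitil} and the Cram\'er--Lundberg normalisation \eqref{eq:def:R}, which is what produces the factor $R=\O(\t)$ in the forcing term — but the decomposition is genuinely different. The paper never compares $\psi$ with $\exp(-Ru)$ directly: it introduces the auxiliary ruin probability $\psiaux(u)=\frac{1}{1+\t}\exp(-Ru)$ of a calibrated exponential-claims model satisfying \eqref{eq:R:aux}, applies Lemma \ref{lem:psi:psitil} \emph{as stated}, controls the resulting $\aux{\delta}$ via the tail functions $\GRbar{0},\GRbar{1}$ and their auxiliary counterparts, and only at the end adds back $\psiaux(u)-\exp(-Ru)=-\frac{\t}{1+\t}\exp(-Ru)$. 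You instead derive the renewal equation for $D=\psi-\exp(-R\,\cdot)$ itself; since $\exp(-Ru)$ is not a ruin probability (it equals $1$ at $u=0$), Lemma \ref{lem:psi:psitil} does not apply verbatim, and you rightly invoke its \emph{proof} — the representation \eqref{eq:Delta:psiprim} and the exponential bound on $\psi'$ — which indeed uses nothing beyond the renewal structure $D=\gamma+\frac{1}{1+\t}D*f_I$ and properties of the original model, so the recycling is legitimate. Your route buys a forcing term $\gamma$ that involves only the original model (no auxiliary tilted moments to estimate), has the sign dictated by Lundberg's inequality, and whose weighted integral you evaluate in closed form, so the key cancellation $\int_0^\infty x\exp(Rx)\frac{f_I(x)}{1+\t}\dx-\frac{\t}{R(1+\t)}=\frac{R}{1+\t}\int_0^\infty x\exp(Rx)\FIbar(x)\dx=\O(\t)$ is explicit rather than hidden in the choice of the auxiliary density; your two-sided bound $R\asymp\t$ is also an honest inequality where the paper uses the expansion \eqref{eq:R:O:theta}. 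The paper's route buys off-the-shelf reuse of Lemma \ref{lem:psi:psitil} and of the $\GRbar{p}$ bookkeeping. All your identities check out (the Fubini computation of $\int_0^\infty\vert\gamma(x)\vert\exp(Rx)\dx$ in particular), and assumption (A\ref{assump:x:exp}) with $k\geq 2$ supplies the finiteness of the tilted moments of orders $1$ and $2$ exactly as in the paper's argument.
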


As a matter of fact, Lemma \ref{lem:psi:exp:err} will give us an estimate for the Cramer-Lundberg approximation error with respect to $\t$ and $u$ (see section \ref{sec:other:approx}). Note, however, that Lemma \ref{lem:psi:exp:err} holds for all $u\geq 0$, unlike the Cramer-Lundberg approximation, which is only an asymptotic result for large $u$, and that the rate $\O(\t)$ is optimal as easily checked for $u=0$.\\

\begin{proof}[Proof of Lemma \ref{lem:psi:exp:err}]
	Let the superscript "aux" denote an auxiliary approximation that we define by
	\begin{equation}\label{eq:def:psiaux}
	\aux{\psi}(u):=\frac{1}{1+\t}\exp(-Ru).
	\end{equation}
	Since $\aux{\psi}(u)$ can be viewed as a ruin probability associated to a model with exponential claims, it is easy to check that
	\begin{equation*}
	\psiaux=\frac{\aux{\FIbar}}{1+\t}+\frac{1}{1+\t}\psiaux*\aux{f_I},
	\end{equation*}
	with $$\aux{f_I}(x)=\mu\aux{\FIbar}(x)=\mu\exp(-\mu x), \textrm{ where }\mu=\frac{R(1+\t)}{\t},$$
	and that
	\begin{equation}\label{eq:R:aux}
	\int_0^\infty\exp(Rx)\frac{\aux{f_I}(x)}{1+\t}\dx=1.
	\end{equation}
	Set $\aux{\Delta}:=\psi-\psiaux$. Using Lemma \ref{lem:psi:psitil}, we have
	\begin{equation}\label{eq:Delta:delta:aux:estim}
	\vert\aux{\Delta}(u)\vert \leq \vert\aux{\delta}(u)\vert+h_I\exp(-Ru)\int_0^u\exp(Rx)\vert\aux{\delta}(x)\vert\dx,
	\end{equation}
	where
	\begin{align*}
	\aux{\delta}(u)&=\frac{\FIbar(u)}{1+\t}-\frac{\aux{\FIbar}(u)}{1+\t}+\psiaux*(\frac{f_I}{1+\t}-\frac{\aux{f_I}}{1+\t})(u)\\
	&=\frac{\FIbar(u)}{1+\t}-\frac{\aux{\FIbar}(u)}{1+\t}+\frac{\exp(-Ru)}{1+\t}\int_0^u\exp(Rx)(\frac{f_I(x)}{1+\t}-\frac{\aux{f_I}(x)}{1+\t})\dx.
	\end{align*}
	Set $$\aux{\widehat\delta}(u):=\aux{\delta}(u)\exp(Ru).$$
	By \eqref{eq:def:R} and \eqref{eq:R:aux}, we get
	\begin{align*}
	\aux{\widehat\delta}(u)&=\exp(Ru)\left(\frac{\FIbar(u)}{1+\t}-\frac{\aux{\FIbar}(u)}{1+\t}\right)-\frac{1}{1+\t}\int_u^\infty\exp(Rx)\left(\frac{f_I(x)}{1+\t}-\frac{\aux{f_I}(x)}{1+\t}\right)\dx\\
	&=\exp(Ru)\left(\frac{\FIbar(u)}{1+\t}-\frac{\aux{\FIbar}(u)}{1+\t}\right)\left(1-\frac{1}{1+\t}\right)\\
	&-\frac{1}{1+\t}\int_u^\infty\Big(\exp(Rx)-\exp(Ru)\Big)\left(\frac{f_I(x)}{1+\t}-\frac{\aux{f_I}(x)}{1+\t}\right)\dx.
	\end{align*}
	Set, for $u\geq 0$ and $p\in\{0,1\}$,
	\begin{equation*}
	\GRbar{p}(u):=\int_u^\infty x^p \exp(Rx)\frac{f_I(x)}{1+\t}\dx
	\end{equation*}
	and
	\begin{equation*}
	\aux{\GRbar{p}}(u):=\int_u^\infty x^p \exp(Rx)\frac{\aux{f_I}(x)}{1+\t}\dx.
	\end{equation*}
	Then,
	\begin{align}
	\vert\aux{\widehat\delta}(u)\vert &\leq \t\exp(Ru)\left(\frac{\FIbar(u)}{1+\t}+\frac{\aux{\FIbar}(u)}{1+\t}\right)\nonumber\\
	&+R\int_u^\infty (x-u)\exp(Rx)\left(\frac{f_I(x)}{1+\t}+\frac{\aux{f_I}(x)}{1+\t}\right)\dx\nonumber\\
	&\leq \t\left(\GRbar{0}(u)+\aux{\GRbar{0}}(u)\right)+R\left(\GRbar{1}(u)+\aux{\GRbar{1}}(u)\right)\nonumber\\
	&=\O(\t)\Big(\GRbar{0}(u)+\aux{\GRbar{0}}(u)+\GRbar{1}(u)+\aux{\GRbar{1}}(u)\Big),\label{eq:delta:aux:hat:G}
	\end{align}
	since $R=\O(\t)$ (see \cite{gran:00} or also \eqref{eq:R:O:theta}).\\
	Clearly, by assumption (A\ref{assump:x:exp}), both $\GRbar{0}$ and $\GRbar{1}$ are $\O(1)$. It is also easy to check (with the use of \eqref{eq:R:O:theta}) that both $\aux{\GRbar{0}}$ and $\aux{\GRbar{1}}$ are $\O(1)$. Thus, from \eqref{eq:delta:aux:hat:G},
	\begin{equation}\label{eq:delta:aux:O:theta}
	\aux{\widehat\delta}(u)=\O(\t).
	\end{equation}
	Besides, for $p\in\{0,1\}$,
	\begin{align}
	\int_0^\infty\GRbar{p}(x)\dx&=\int_0^\infty\int_x^\infty y^p\exp(Ry)\frac{f_I(y)}{1+\t}\dy\dx\nonumber\\
	&=\int_0^\infty\int_0^y\dx\  y^p\exp(Ry)\frac{f_I(y)}{1+\t}\dy\nonumber\\
	&=\int_0^\infty y^{p+1}\exp(Ry)\frac{f_I(y)}{1+\t}\dy\nonumber,
	\end{align}
	which is $\O(1)$ by assumption (A\ref{assump:x:exp}). Similar computations can be easily done to get also $\int_0^\infty\aux{\GRbar{p}}(x)\dx=\O(1)$. Thus, again from \eqref{eq:delta:aux:hat:G},
	\begin{equation}\label{eq:delta:aux:int:O:theta}
	\int_0^\infty\vert\aux{\widehat\delta}(x)\vert\dx=\O(\t).
	\end{equation}
	Then, \eqref{eq:Delta:delta:aux:estim}, \eqref{eq:delta:aux:O:theta} and \eqref{eq:delta:aux:int:O:theta} give
	\begin{equation*}
	\aux{\Delta}(u)=\O\left(\vert\aux{\widehat\delta}(u)\vert+\int_0^\infty\vert\aux{\widehat\delta}(x)\vert\dx\right)\exp(-Ru)=\O(\t)\exp(-Ru).
	\end{equation*}
	Therefore,
	\begin{align*}
	\psi(u)-\exp(-Ru)&=\aux{\Delta}+(\psiaux(u)-\exp(-Ru))\\
	&=\O(\t)\exp(-Ru)-\t\exp(-Ru)\\
	&=\O(\t)\exp(-Ru).
	\end{align*}
	We have proved Lemma \ref{lem:psi:exp:err}.
\end{proof}

\subsection{Results for De Vylder type approximations}
We now state and prove key results that are specific to De Vylder type approximations.\\
The following lemma gives necessary and sufficient conditions about the coefficients of any De Vylder type approximation. These conditions will be crucial for the proofs of our main results. Notice that, in general, they do not imply fully explicit expressions of all the approximating parameters (which are usually derived in the literature for particular approximations like De Vylder's original one, mainly for numerical purposes).

\begin{lemma}\label{lem:U:Util}
Let $k\geq 2$. Suppose that both $Z_1$ and $\Ztil_1$ have finite first $k$ moments. The following two assertions are equivalent:
\begin{enumerate}[(i)]
\item\label{item:U:Util} $\E[U_t^j(u)]=\E[\tilde U_t^j(u)]$, for all $j=1\dots k$ and for all $t\geq 0$ and $u\geq 0$.
\item\label{item:mtil:ttil:ltil}
$$\left\{
\begin{aligned}
&\mtil_j=\frac{\mtil_2}{m_2}m_j\textrm{, for all }j=2\dots k,\\
&\ttil=\frac{m_1\mtil_2}{m_2\mtil_1}\t \textrm{, and }\\
&\ltil=\frac{m_2}{\mtil_2}\l.
\end{aligned}
\right.$$
\end{enumerate}
\end{lemma}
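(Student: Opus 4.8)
The plan is to reduce both assertions to equalities between the cumulants of $U_t(u)$ and $\Util_t(u)$, for which the compound Poisson structure gives transparent formulas. Writing $S_t:=\sum_{i=1}^{N_t}Z_i$, so that $U_t(u)=u+ct-S_t$, I would first recall that the characteristic function of the aggregate claims is $\E[e^{i\xi S_t}]=\exp\big(\l t(\phi_{Z_1}(\xi)-1)\big)$, where $\phi_{Z_1}$ is the characteristic function of a single claim. Since $Z_1$ has $k$ finite moments, $\phi_{Z_1}$ is $k$ times differentiable at the origin, and reading off the logarithm of this characteristic function up to order $k$ shows that the $n$-th cumulant of $S_t$ equals $\l t\,m_n$ for $1\le n\le k$. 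Because cumulants of order $\ge 2$ are invariant under the deterministic shift $u+ct$, and the first cumulant is just the mean, this yields $\kappa_1(U_t(u))=u+ct-\l t m_1=u+\l m_1\t\,t$ (using $c=\l m_1(1+\t)$) and $\kappa_n(U_t(u))=(-1)^n\l t\,m_n$ for $2\le n\le k$; the same holds with tildes.

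Next I would use the fact that, for a random variable with $k$ finite moments, the first $k$ raw moments and the first $k$ cumulants determine each other through a triangular polynomial change of variables ($\mu_n'=\kappa_n+P_n(\kappa_1,\dots,\kappa_{n-1})$ and conversely). Consequently, for each fixed $(t,u)$, the equality $\E[U_t^j(u)]=\E[\Util_t^j(u)]$ for all $j=1,\dots,k$ is equivalent to $\kappa_j(U_t(u))=\kappa_j(\Util_t(u))$ for all $j=1,\dots,k$. Thus assertion (\ref{item:U:Util}) is equivalent to requiring these cumulant identities to hold for every $t\ge 0$ and $u\ge 0$.

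Finally, I would turn each cumulant identity into a polynomial identity in $t$ (and $u$) and match coefficients. The order-one identity $u+\l m_1\t\,t=u+\ltil\mtil_1\ttil\,t$ holds for all $u,t$ iff $\l m_1\t=\ltil\mtil_1\ttil$, while for $2\le j\le k$ the identity $(-1)^j\l m_j\,t=(-1)^j\ltil\mtil_j\,t$ holds for all $t$ iff $\l m_j=\ltil\mtil_j$. Hence (\ref{item:U:Util}) is equivalent to the system $\l m_j=\ltil\mtil_j$ ($2\le j\le k$) together with $\l m_1\t=\ltil\mtil_1\ttil$. It then remains to check that this system is equivalent to (\ref{item:mtil:ttil:ltil}): taking $j=2$ gives $\ltil=(m_2/\mtil_2)\l$, substituting this into $\l m_j=\ltil\mtil_j$ yields $\mtil_j=(\mtil_2/m_2)m_j$ for $2\le j\le k$, and substituting it into $\l m_1\t=\ltil\mtil_1\ttil$ yields $\ttil=\big(m_1\mtil_2/(m_2\mtil_1)\big)\t$; these steps are reversible, giving both implications at once.

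The only genuinely delicate point is the reduction in the second step---the claim that matching the first $k$ moments is the same as matching the first $k$ cumulants---since we only have finitely many moments and so cannot appeal to a globally defined moment generating function; I would justify the cumulant extraction purely through the order-$k$ Taylor expansion of $\log\phi_{Z_1}$ at the origin, which is legitimate under the stated finite-moment hypothesis. Everything after that is elementary coefficient matching and algebraic rearrangement.
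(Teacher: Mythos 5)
Your proof is correct and is essentially the same as the paper's: the paper differentiates $\log M(s,t,u)$ (via Fa\`a di Bruno's formula applied to the moment generating function) where you extract cumulants from $\log\phi_{Z_1}$, and the triangular moment--cumulant relations you invoke are exactly what Fa\`a di Bruno's formula yields, after which both arguments match coefficients in $t$ to obtain the system $\l m_1\t=\ltil\mtil_1\ttil$, $\l m_j=\ltil\mtil_j$ and rearrange. Your use of the characteristic function rather than the moment generating function is a mild technical refinement under the finite-moment hypothesis, not a different route.
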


\begin{remark}\label{rem:DV:param}
Condition \eqref{item:mtil:ttil:ltil} of Lemma \ref{lem:U:Util} implies that the $\mtil_j$'s do not depend on $\t$, and that $\ttil$ linearly depends on $\t$, so that $\ttil=\O(\t)$. 
\end{remark}

\begin{proof}
Let $M(.,t,u)$ be the moment generating function of $U_t(u)$:
$$M(s,t,u)=\E[\exp(sU_t(u))].$$
Condition \eqref{item:U:Util} of Lemma \ref{lem:U:Util} is equivalent to
\begin{equation}\label{eq:M:Mtil}
\frac{\partial^j M}{\partial s^j}(0,t,u)=\frac{\partial^j \Mtil}{\partial s^j}(0,t,u),
\end{equation}
for all $j=1\dots k$ and for all $t\geq 0$ and $u\geq 0$.\\
We have
$$M(s,t,u)=\exp(L(s,t,u)),$$
where $$L(s,t,u):=su+sct+\l t M_Z(-s)-\l t$$
and $M_Z(-s)=\E[\exp(-sZ_1)]$. Similar quantities are defined for the approximation process $(\Util_t)$, and one has
$$\Mtil(s,t,u)=\exp(su+s\ctil t+\ltil t M_{\Ztil}(-s)-\ltil t)=\exp(\tilde L(s,t,u)).$$
First, we have
\begin{align*}
\frac{\partial M}{\partial s}(0,t,u)&=\frac{\partial L}{\partial s}(0,t,u)M(0,t,u)\\
&=u+ct-\l t m_1\\
&=u+\l m_1\t t,
\end{align*}
so that identity \eqref{eq:M:Mtil} with $j=1$ writes
\begin{equation}\label{eq:lmt:lmttil}
\l m_1 \t=\ltil \mtil_1 \ttil.
\end{equation}
Next, for $j>1$, we have (by Fa\`a di Bruno's formula)
$$\frac{\partial^j M}{\partial s^j}(s,t,u)=\left(\frac{\partial^j L}{\partial s^j}+P(\frac{\partial^{j-1} L}{\partial s^{j-1}},\dots,\frac{\partial L}{\partial s})\right)(s,t,u)M(s,t,u),$$
where $P(.,\dots,.)$ is a polynomial function (with universal constant coefficients).\\
Therefore, by induction on $j$, it is clear that identity \eqref{eq:M:Mtil} with $j=2\dots k$ is equivalent to
$$\frac{\partial^j L}{\partial s^j}(0,t,u)=\frac{\partial^j \tilde L}{\partial s^j}(0,t,u),$$
that is 
\begin{equation}\label{eq:lmk:lmktil}
\l m_j=\ltil \mtil_j, \textrm{, for all }j=2\dots k.
\end{equation}
In conclusion, \eqref{eq:M:Mtil} is equivalent to system \eqref{eq:lmt:lmttil}-\eqref{eq:lmk:lmktil}, which  is equivalent to condition \eqref{item:mtil:ttil:ltil} of Lemma \ref{lem:U:Util}.
\end{proof}

The following lemma shows that De Vylder type approximations are actually approximations of the adjustment coefficient, of order $\t^k$. It generalizes a similar result stated by \cite{gran:00} for De Vylder's original approximation with $k=3$ (where the author yet used the explicit expressions of the parameters available for this particular approximation).

\begin{lemma}\label{lem:R:Rtil}
Let $k\geq 2$. Suppose that each of models \eqref{eq:reserve}
and \eqref{eq:reserve:tilde} satisfies assumptions (A\ref{assump:CL}) and (A\ref{assump:x:exp}).\\
If $\E[U_t^j(u)]=\E[\tilde U_t^j(u)]$, for all $j=1\dots k$ and for all $t\geq 0$ and $u\geq 0$, then
\begin{equation*}
R-\Rtil=\O(\t^k).
\end{equation*}
\end{lemma}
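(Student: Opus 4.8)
The plan is to turn each Cram\'er--Lundberg condition (A\ref{assump:CL}) into a power series in the adjustment coefficient whose coefficients are the claim moments, and then exploit the moment matching of Lemma~\ref{lem:U:Util} to cancel all terms up to order $k$. First I would rewrite \eqref{eq:def:R}: since $f_I=\Fbar/m_1$ and $1+\t=c/(\l m_1)$, condition (A\ref{assump:CL}) is equivalent to $\int_0^\infty e^{Rx}\Fbar(x)\dx=m_1(1+\t)$, and subtracting $\int_0^\infty\Fbar(x)\dx=m_1$ gives the clean identity
\begin{equation*}
m_1\t=\int_0^\infty\left(e^{Rx}-1\right)\Fbar(x)\dx=\sum_{j\geq 2}\frac{m_j}{j!}R^{j-1},
\end{equation*}
where the last equality uses $e^{Rx}-1=\sum_{i\geq1}(Rx)^i/i!$ together with $\int_0^\infty x^{i}\Fbar(x)\dx=m_{i+1}/(i+1)$. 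The same computation for the approximating model yields $\mtil_1\ttil=\sum_{j\geq2}(\mtil_j/j!)\Rtil^{\,j-1}$.

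Next I would insert the relations of Lemma~\ref{lem:U:Util}. Writing $\alpha:=\mtil_2/m_2$, these give $\mtil_j=\alpha\,m_j$ for $j=2,\dots,k$ and $\mtil_1\ttil=\alpha\,m_1\t$; moreover $\alpha=\O(1)$ and $1/\alpha=\O(1)$, since by Remark~\ref{rem:DV:param} the $\mtil_j$ do not depend on $\t$. Dividing the tilde identity by $\alpha$, both models now share the \emph{same} degree-$(k-1)$ polynomial $P(r):=\sum_{j=2}^k(m_j/j!)\,r^{j-1}$, so subtracting the two identities gives
\begin{equation*}
\sum_{j=2}^{k}\frac{m_j}{j!}\left(R^{j-1}-\Rtil^{\,j-1}\right)=\frac1\alpha\,\tilde\rho(\Rtil)-\rho(R),
\end{equation*}
where $\rho(R):=\sum_{j>k}(m_j/j!)R^{j-1}$ and $\tilde\rho(\Rtil):=\sum_{j>k}(\mtil_j/j!)\Rtil^{\,j-1}$ collect the unmatched tails. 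Factoring the left-hand side as $(R-\Rtil)\bigl[\tfrac{m_2}{2}+\sum_{j=3}^k\tfrac{m_j}{j!}(R^{j-2}+\cdots+\Rtil^{\,j-2})\bigr]$, the bracket is bounded below by $m_2/2>0$ because $R,\Rtil>0$ and all $m_j>0$; hence it suffices to show that the right-hand side is $\O(\t^k)$.

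The main work, and the only genuine obstacle, is bounding the tail remainders. Here I would return to an integral representation: using $m_j=\int_0^\infty j\,x^{j-1}\Fbar(x)\dx$ one gets $\rho(R)=\int_0^\infty\Fbar(x)\sum_{i\geq k}(Rx)^i/i!\,\dx$, and the elementary inequality $\sum_{i\geq k}t^i/i!\leq (t^k/k!)\,e^{t}$ for $t\geq0$ yields
\begin{equation*}
0\leq\rho(R)\leq\frac{R^k}{k!}\int_0^\infty x^k e^{Rx}\Fbar(x)\dx=\frac{R^k}{k!}\,m_1(1+\t)\,\mexp=\O(R^k)=\O(\t^k),
\end{equation*}
the finiteness and $\O(1)$-boundedness of $\int_0^\infty x^ke^{Rx}\Fbar(x)\dx$ coming from assumption (A\ref{assump:x:exp}), and $R=\O(\t)$ from \eqref{eq:R:O:theta}. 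The same bound applies verbatim to $\tilde\rho(\Rtil)$ via (A\ref{assump:x:exp}) for the approximating model and $\Rtil=\O(\ttil)=\O(\t)$; note that the relevant exponential moment stays uniformly bounded in $\t$ because, by Remark~\ref{rem:DV:param}, the approximating claim law is itself $\t$-independent while $\Rtil$ remains bounded for $\t$ in a bounded interval. Combining, the right-hand side is $\O(\t^k)$, and dividing by the bracket (which is $\geq m_2/2$) gives $R-\Rtil=\O(\t^k)$, as claimed.
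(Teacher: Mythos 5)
Your proof is correct and follows essentially the same route as the paper: expand the Cram\'er--Lundberg condition \eqref{eq:def:R} in powers of the adjustment coefficient, use the moment matching of Lemma~\ref{lem:U:Util} to make the two expansions agree up to degree $k-1$, bound the remainders by $\O(\t^k)$ via (A\ref{assump:x:exp}) and $R=\O(\t)$, and factor $R^{j}-\Rtil^{j}$ to divide by a bracket bounded below by a positive constant. The only difference is presentational — you make the Taylor remainder bound explicit where the paper simply writes $\O(R^k)$.
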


\begin{proof}
From \eqref{eq:def:R}, we have
$$\sum_{j=0}^{k-1}\frac{R^j}{j!}\int_0^\infty x^jf_I(x)\dx+\O(R^k)=1+\t,$$
that is
$$\sum_{j=1}^{k-1}\frac{m_{j+1}}{m_1(j+1)!}R^j+\O(R^k)=\t.$$
Notice that, in particular, we get (see also \cite{gran:00})
\begin{equation}\label{eq:R:O:theta}
R=\frac{2m_1}{m_2}\t+\O(\t^2)=\O(\t).
\end{equation}
Similarly,
$$\sum_{j=1}^{k-1}\frac{\mtil_{j+1}}{\mtil_1(j+1)!}\Rtil^j+\O(\Rtil^k)=\ttil,$$
that is, by virtue of Lemma \ref{lem:U:Util},
$$\sum_{j=1}^{k-1}\frac{m_{j+1}}{m_1(j+1)!}\Rtil^j+\O(\Rtil^k)=\t.$$
Therefore,
$$\sum_{j=1}^{k-1}\frac{m_{j+1}}{m_1(j+1)!}(R^j-\Rtil^j)=\O(\t^k).$$
Since $R^j-\Rtil^j=(R-\Rtil)\sum_{i=0}^{j-1}R^i\Rtil^{j-1-i}$, we get
$$(R-\Rtil)\left(\frac{m_{2}}{2m_1}+\O(\t)\right)=\O(\t^k),$$
which gives
$$R-\Rtil=\O(\t^k).$$
\end{proof}

\section{Approximation error for the ruin probability}\label{sec:err:proba}

Hereafter, the process $U_t$, defined by \eqref{eq:reserve}, is the original risk reserve process with associated ruin probability $\psi(u)$ to be approximated. The superscript "DV" denotes a De Vylder type approximation of order $k\geq 2$, meaning that it is obtained by matching the first $k$ moments of $U_t$ and $\DV{U}_t$.\\
De Vylder's adjustment coefficient, denoted by $\DV{R}$, is just defined by
\begin{equation*}
\int_0^\infty\exp(\DV{R}x)\frac{\DV{f_I}(x)}{1+\DV{\t}}\dx=1.
\end{equation*}
We recall that we do not assume any particular or explicit expression for the approximation parameters $F^{DV}$, $\DV{R}$ and $\psiDV$. Whereas De Vylder's original approximation is exponential and of order $k=3$, our De Vylder type approximation allows for any approximating distribution and for any order $k\geq 2$. Theorem \ref{thm:DV:err:proba} gives an estimate of the approximation error for the ruin probability.

\subsection{Main result}

\begin{theorem}\label{thm:DV:err:proba}
	Suppose that both reserve models $(U_t)$ and $(\DV{U_t})$ satisfy assumptions (A\ref{assump:CL}), (A\ref{assump:x:exp}) and (A\ref{assump:hazard}).\\
Let $k\geq 2$ and $\psiDV(u)$ the $k^\textrm{th}$ order De Vylder type approximation of $\psi(u)$.\\
Then, 
\begin{align}
\psi(u)-\psiDV(u)&=\O\left(\t+\t^ku\right)\exp\left(-(R\wedge \DV{R})u\right)\label{eq:DV:err}\\
&=\O\left(\t+\t^ku\right)\exp\left(-\Big\vert\frac{2m_1}{m_2}\t+\O(\t^2)\Big\vert u\right).\label{eq:DV:err:bis}
\end{align}
For the relative error,
\begin{equation}\label{eq:DV:rel:error}
\frac{\psi(u)-\psiDV(u)}{\psi(u)}=\O\left(\t+\t^ku\right)\exp\left(\O(\t^ku)\indic_{R>\DV{R}}\right).
\end{equation}
\end{theorem}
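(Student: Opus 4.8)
The plan is to route the whole estimate through the Cramér--Lundberg exponentials and use Lemma~\ref{lem:psi:exp:err} as the main engine, rather than estimating $\delta$ directly via Lemma~\ref{lem:psi:psitil}. First I would split the error into a telescoping sum
\begin{equation*}
\psi(u)-\psiDV(u)=\big(\psi(u)-\exp(-Ru)\big)+\big(\exp(-Ru)-\exp(-\DV{R}u)\big)+\big(\exp(-\DV{R}u)-\psiDV(u)\big),
\end{equation*}
so that the two outer terms measure how far each true ruin probability sits from its own Cramér--Lundberg exponential, while the middle term isolates the gap between the two adjustment coefficients.

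For the first term, Lemma~\ref{lem:psi:exp:err} applied to $(U_t)$ gives $\psi(u)-\exp(-Ru)=\O(\t)\exp(-Ru)$. For the third term I would apply the same lemma to the approximating model $(\DV{U_t})$, whose hypotheses (A\ref{assump:CL})--(A\ref{assump:hazard}) hold by assumption; this yields $\psiDV(u)-\exp(-\DV{R}u)=\O(\ttil)\exp(-\DV{R}u)=\O(\t)\exp(-\DV{R}u)$, where the last equality uses $\ttil=\O(\t)$ from Remark~\ref{rem:DV:param}. For the middle term I would factor out $\exp(-(R\wedge\DV{R})u)$ and bound the remaining difference of exponentials by $|R-\DV{R}|\,u$ (using $|1-\exp(-a)|\le a$ for $a\ge 0$), which is $\O(\t^k u)$ by Lemma~\ref{lem:R:Rtil}; hence $\exp(-Ru)-\exp(-\DV{R}u)=\O(\t^k u)\exp(-(R\wedge\DV{R})u)$. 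Bounding $\exp(-Ru)$ and $\exp(-\DV{R}u)$ in the two outer terms by $\exp(-(R\wedge\DV{R})u)$ and summing produces \eqref{eq:DV:err}. The reformulation \eqref{eq:DV:err:bis} is then immediate from \eqref{eq:R:O:theta}: since $k\ge 2$ we have $\O(\t^k)\subseteq\O(\t^2)$, so both $R$ and $\DV{R}=R-\O(\t^k)$ equal $\tfrac{2m_1}{m_2}\t+\O(\t^2)$, and therefore so does their (positive) minimum.

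For the relative error I would divide \eqref{eq:DV:err} by $\psi(u)$ and lower-bound the latter through Lemma~\ref{lem:psi:exp:err}: writing $\psi(u)\exp(Ru)=1+\O(\t)$, for $\t$ small enough in the bounded interval this stays bounded away from $0$, so $1/\psi(u)=\O(\exp(Ru))$. Multiplying then inserts a factor $\exp\big((R-R\wedge\DV{R})u\big)$, and the key observation is that $R-R\wedge\DV{R}=(R-\DV{R})\indic_{R>\DV{R}}=\O(\t^k)\indic_{R>\DV{R}}$, which is exactly the $\exp(\O(\t^ku)\indic_{R>\DV{R}})$ factor appearing in \eqref{eq:DV:rel:error}; this is the conceptual heart of the statement, since it shows the relative error is tame unless $R>\DV{R}$, in which case it may blow up in $u$.

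The step I expect to require the most care is the uniformity of the $\O$-constants when invoking Lemma~\ref{lem:psi:exp:err} on the approximating model: one must check that the quantities $\mexp$ and $h_I$ attached to $(\DV{U_t})$ remain bounded as $\t$ varies (the moments $\mtil_j$ are already $\t$-free by Remark~\ref{rem:DV:param}), so that $\O(\ttil)=\O(\t)$ is genuinely uniform in $(u,\t)$. The second delicate point is the lower bound on $\psi(u)$ needed for the relative error, which forces one to restrict to $\t$ small enough that $\psi(u)\exp(Ru)$ does not approach $0$; the remaining bookkeeping, namely tracking which of $R,\DV{R}$ is the smaller so that the leftover exponential collapses unless $R>\DV{R}$, is what produces the indicator and hence the precise form of \eqref{eq:DV:rel:error}.
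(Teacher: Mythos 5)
Your proposal is correct and follows essentially the same route as the paper's own proof: the identical telescoping decomposition through the two Lundberg exponentials, Lemma~\ref{lem:psi:exp:err} applied to both models (with Remark~\ref{rem:DV:param} giving $\O(\tDV)=\O(\t)$), Lemma~\ref{lem:R:Rtil} for the middle term, and the same $\exp((R-R\wedge\RDV)u)$ bookkeeping for the relative error. The only cosmetic difference is that for \eqref{eq:DV:err:bis} you absorb $\O(\t^k)$ into $\O(\t^2)$ via Lemma~\ref{lem:R:Rtil}, whereas the paper instead uses the identity $\frac{m_1}{m_2}\t=\frac{\mDV_1}{\mDV_2}\tDV$ from Lemma~\ref{lem:U:Util}; both are valid.
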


Before giving the proof of Theorem \ref{thm:DV:err:proba}, let us make some comments and numerical illustrations.\\

As one can see from the proof of Theorem \ref{thm:DV:err:proba}, the $\O(\t)$ term of the error comes from a Lundberg type approximation as given by Lemma \ref{lem:psi:exp:err} (with the exact adjustment coefficient $R$), whereas the $\O(\t^ku)$ term comes from the extra approximation of $R$ by $\DV{R}$, whose accuracy is  $\O(\t^k)$ as given by Lemma \ref{lem:R:Rtil}.\\

In practice, the safety loading coefficient $\t$ is small (and usually smaller than unity). For small $u$, the bound in \eqref{eq:DV:rel:error} becomes $\O(\t)$: this is optimal by considering the relative error at $u=0$, equal to $\vert\tDV-\t\vert(1+\tDV)^{-1}$.\\

For reasonable $u$ (essentially, for $u=\O(\t^{-k+1})$), the second part of the relative error ($\t^ku\exp\left(\O(\t^ku)\right)$ at most) is still small for small $\t$, but becomes non-negligible for larger $u$ (which was also heuristically pointed out by \cite{de:vyld:78}).\\

Figure \ref{fig:DV:err:proba:u:mix:exp} illustrates the above comments. We have computed the (exact) relative error of De Vylder's original approximation for exponentially mixed claims. While Figure \eqref{fig:DV:err:proba:u:mix:exp:small:err}, with $\t=5\%$, shows a $2\%$ relative error when the exact ruin probability $\psi(u)$ reaches $0.5\%$ (the value of Solvency II threshold), Figure \eqref{fig:DV:err:proba:u:mix:exp:big:err} shows that, already with $\t=20\%$ and as soon as $\psi(u)$ goes below $0.5\%$, the corresponding De Vylder relative error exceeds $21\%$! Therefore, this is a practical situation where one observes a blow-up of the error.\\

\begin{figure}[!htbp]
	\centering
	\begin{subfigure}{0.8\textwidth}
		\centering
		\includegraphics[width=\textwidth]{./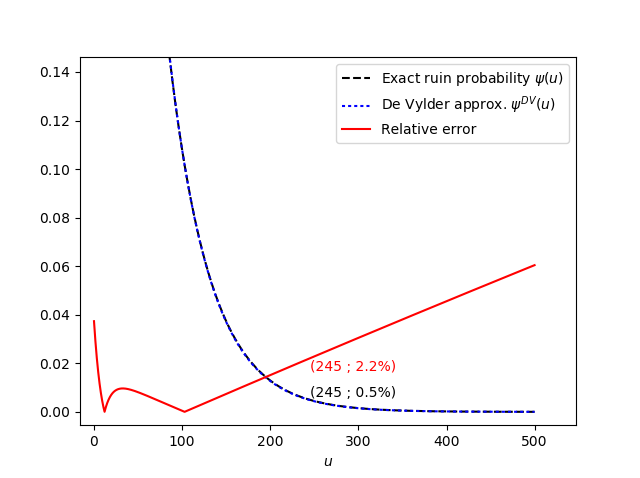}
		\caption{With $\t=5\%$. At $u=245$, where the exact ruin probability reaches $0.5\%$ (Solvency II threshold), De Vylder's relative error is about $2\%$.}
		\label{fig:DV:err:proba:u:mix:exp:small:err}
	\end{subfigure}
	\begin{subfigure}{0.8\textwidth}
	\includegraphics[width=\textwidth]{./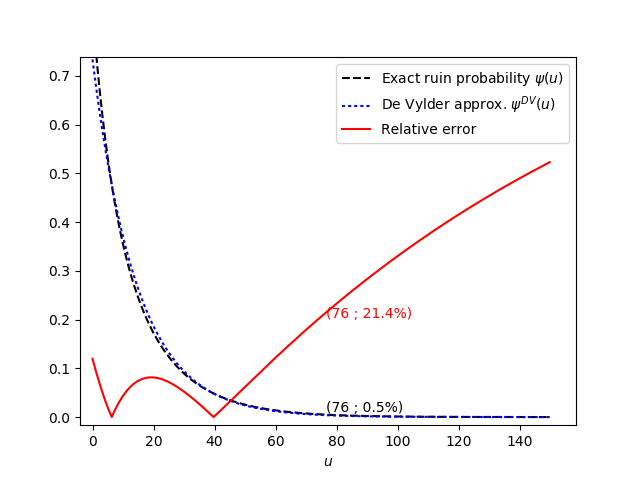}
	\caption{With $\t=20\%$. At $u=76$, where the exact ruin probability reaches $0.5\%$ (Solvency II threshold), De Vylder's relative error goes above $21\%$!}
	\label{fig:DV:err:proba:u:mix:exp:big:err}
	\end{subfigure}
	\caption{De Vylder approximation of the ruin probability, as a function of $u$, with two different $\t$'s. The claims are exponentially mixed with density $a\beta_1\exp(-\beta_1x)+(1-a)\beta_2\exp(-\beta_2x)$, with $a=0.01$, $\beta_1=0.1$, $\beta_2=0.6$.}
	\label{fig:DV:err:proba:u:mix:exp}
\end{figure}

The bound obtained in \eqref{eq:DV:rel:error} for the relative error shows either a linear or an exponential blow-up with respect to $u$, depending on whether $R\leq \RDV$ or the opposite. This is confirmed by the numerical examples illustrated by Figures \ref{fig:DV:err:u} and \ref{fig:DV:err:theta}, where we have computed the relative error of De Vylder's original approximation in different situations.\\

\begin{figure}[!htbp]
	\centering
	\begin{subfigure}{0.8\textwidth}
		\centering
		\includegraphics[width=\textwidth]{./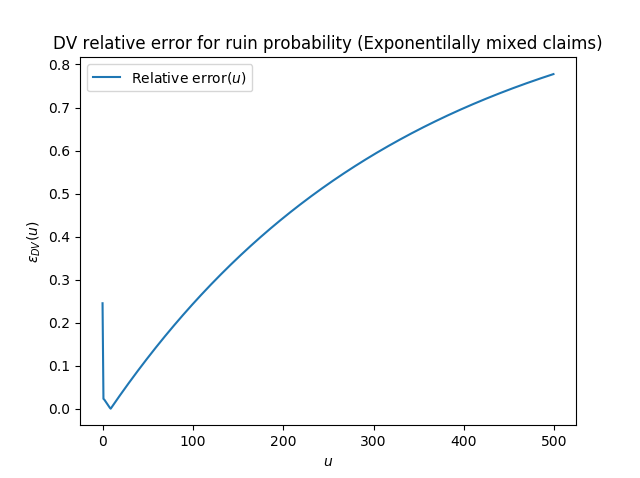}
		\caption{With exponentially mixed claims ($R<\RDV$)}
		\label{fig:DV:err:u:mix:exp}
	\end{subfigure}
	\begin{subfigure}{0.8\textwidth}
		\centering
		\includegraphics[width=\textwidth]{./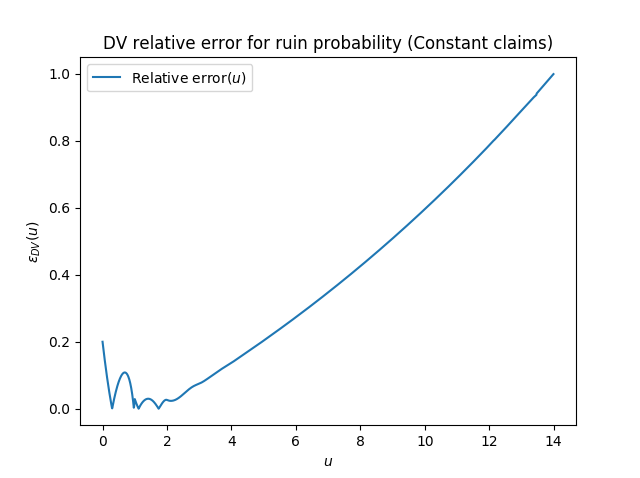}
		\caption{With deterministic claims ($R>\RDV$)}
		\label{fig:DV:err:u:determ}
	\end{subfigure}
	\caption{Two examples of De Vylder's approximation relative error for the ruin probability, as a function of $u$ (with $\t=1$). In Figure \eqref{fig:DV:err:u:mix:exp}, the claims density is $a\beta_1\exp(-\beta_1x)+(1-a)\beta_2\exp(-\beta_2x)$, with $a=0.0584$, $\beta_1=0.359$, $\beta_2=7.5088$ (as in \cite{cize:hard:wero:11}). In Figure \eqref{fig:DV:err:u:determ}, the claims are constant equal to 1 (see \cite{shiu:88} for explicit expressions).}
	\label{fig:DV:err:u}
\end{figure}

\begin{figure}[!htbp]
	\centering
	\begin{subfigure}{0.8\textwidth}
		\centering
		\includegraphics[width=\textwidth]{./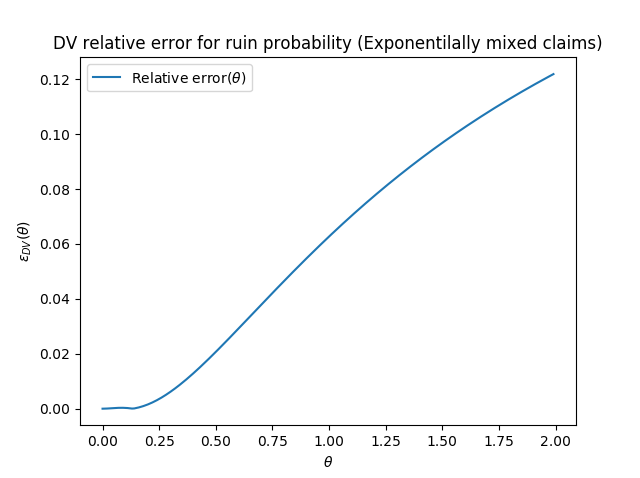}
		\caption{With exponentially mixed claims ($R<\RDV$)}
		\label{fig:DV:err:theta:mix:exp}
	\end{subfigure}
	\begin{subfigure}{0.8\textwidth}
		\centering
		\includegraphics[width=\textwidth]{./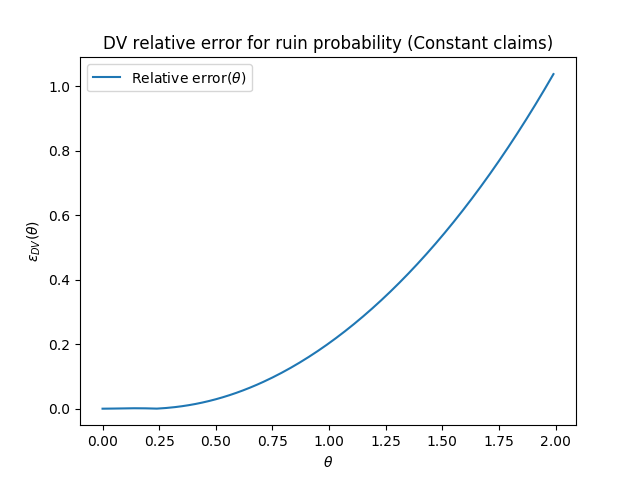}
		\caption{With deterministic claims ($R>\RDV$)}
		\label{fig:DV:err:theta:determ}
	\end{subfigure}
	\caption{Two examples of De Vylder's approximation relative error for the ruin probability, as a function of $\theta$. In Figure \eqref{fig:DV:err:theta:mix:exp}, $u=30$ and in \eqref{fig:DV:err:theta:determ}, $u=5$. The values of the other parameters are the same as in Figure \ref{fig:DV:err:u}.}
	\label{fig:DV:err:theta}
\end{figure}

Besides, for fixed $\t<1$ and $u>0$, the term $\t^ku$ is smaller for higher approximation order $k$, so that the bound  \eqref{eq:DV:rel:error} for the relative error becomes smaller. This explains the numerical results obtained by \cite{burn:mist:wero:05} for a Gamma type approximation with $k=4$, where an improvement of the relative error was numerically observed, in comparison with De Vylder's original approximation with $k=3$.

\subsection{Proof of Theorem \ref{thm:DV:err:proba}}\label{sec:proof:thm:DV:err:proba}


\begin{proof}[Proof of Theorem \ref{thm:DV:err:proba}]

By Lemma \ref{lem:psi:exp:err}, we have
\begin{equation*}
\psi(u)-\exp(-Ru)=\O(\t)\exp(-Ru)
\end{equation*}
and
\begin{align*}
\psiDV(u)-\exp(-\DV{R}u)&=\O(\DV{\t})\exp(-\DV{R}u)\\
&=\O(\t)\exp(-\DV{R}u),
\end{align*}
where we have used Lemma \ref{lem:U:Util} and Remark \ref{rem:DV:param}. Then,
\begin{align*}
\psi(u)-\psiDV(u)&=(\psi(u)-\exp(-Ru))-(\psiDV(u)-\exp(-\DV{R}u))\\
&+\exp(-Ru)-\exp(-\DV{R}u)\\
&=\O(\t)\exp(-Ru)+\O(\t)\exp(-\DV{R}u)\\
&+\O(R-\DV{R})u\exp\left(-(R\wedge \DV{R})u\right)\\
&=\O(\t+\t^ku)\exp\left(-(R\wedge \DV{R})u\right),
\end{align*}
where we have used Lemma \ref{lem:R:Rtil}. We have proved \eqref{eq:DV:err}.\\
By \eqref{eq:R:O:theta}, we have
\begin{align*}
R&=\frac{2m_1}{m_2}\t+\O(\t^2);\\
\RDV&=\frac{2\mDV_1}{\mDV_2}\tDV+\O((\tDV)^2).
\end{align*}
Now, from Lemma \ref{lem:U:Util}, we have
\begin{equation*}
\frac{m_1}{m_2}\t=\frac{\mDV_1}{\mDV_2}\tDV.
\end{equation*}
Thus
\begin{equation*}
R\wedge\RDV=\frac{2m_1}{m_2}\t+\O(\t^2),
\end{equation*}
and identity \eqref{eq:DV:err:bis} follows from \eqref{eq:DV:err}.\\
From \eqref{eq:DV:err}, and using Lemma \ref{lem:psi:exp:err} (which says that $\psi(u)=(1+\O(\t))\exp(-Ru)$), estimate \eqref{eq:DV:rel:error} for the relative error is straightforward. The proof of Theorem \ref{thm:DV:err:proba} is complete.
\end{proof}

\subsection{Comparison with other approximations}\label{sec:other:approx}
We can take advantage of 
Lemma \ref{lem:psi:exp:err} in order to derive error estimates for known exponential type approximations.\\
First, it turns out that the classical Cramer-Lundberg approximation $\psiCL(u)$ is of the form $(1+\O(\t))\exp(-Ru)$. Indeed, $\psiCL(u)=\alpha\exp(-Ru)$, with
\begin{align}
\alpha&=\frac{\t}{R\int_0^\infty x\exp(Rx)f_I(x)\dx}\nonumber\\
&=\frac{\t}{R\left(\int_0^\infty xf_I(x)\dx+\O(R)\right)}\nonumber\\
&=\frac{\t}{R\left(\frac{m_2}{2m_1}+\O(\t)\right)}\nonumber\\
&=\frac{\t}{\left(\frac{2m_1}{m_2}\t+\O(\t^2)\right)\left(\frac{m_2}{2m_1}+\O(\t)\right)}=1+\O(\t),\nonumber
\end{align}
where we have used \eqref{eq:R:O:theta}.\\
Therefore, and thanks to Lemma \ref{lem:psi:exp:err}, we obtain an estimate for the Cramer-Lundberg approximation:
\begin{equation}\label{eq:CL:err:estim}
\psi(u)-\psiCL(u)=\O(\t)\exp(-Ru),
\end{equation}
then with an $\O(\t)$ relative error. It though requires exact knowledge of the adjustment coefficient $R$.\\
We point out that estimate \eqref{eq:CL:err:estim} is optimal, with respect to our parameter of interest $\t$, for small $u$ (one can easily check that $\psi(0)-\psiCL(0)=C\t$, with a positive constant $C$). However, we cannot claim that it is optimal with respect to large $u$ (for the case of bounded claims, which is not ours, we refer the interested reader to \cite{ekhe:silv:11} and \cite{silv:mart:14}, where the authors state a relative error that is a decaying exponential of $u$ that depends on the trucation bound of the claims).\\
We can go further and deduce error bounds for all approximations of the form $$(1+\O(\t))\exp(-\widehat R u),$$
where $\widehat R$ is an approximation of $R$. These include, in addition to Cramer-Lundberg's one, De Vylder's original $\psiDV$ with $k=3$, Lundberg's $\psi^{\textrm L}$, R\'enyi's $\psi^{\textrm R}$, the diffusion $\psi^{\textrm D}$ and the exponential $\psi^{\textrm E}$ approximations (see \cite{gran:00} for more details).\\
For such approximations (and still using Lemma \ref{lem:psi:exp:err}), the error is
\begin{equation}\label{eq:exp:approx:err:estim}
\O\Big(\t+\varepsilon_Ru\Big)\exp\left(-(R\wedge\widehat R)u\right),
\end{equation}
where $\varepsilon_R:=\vert R-\widehat R\vert$.\\
It turns out that $\psi^{\textrm L}(u)$, $\psi^{\textrm R}(u)$, $\psi^{\textrm D}(u)$ and $\psi^{\textrm E}(u)$ are all of the form
$$\left(1+\O(\t)\right)\exp\left(-\frac{2m_1}{m_2}\t(1+\widehat\O(\t))u\right).$$ 
Now, remember from \eqref{eq:R:O:theta} that $\frac{2m_1}{m_2}\t$ is just the first order approximation of $R$ with respect to $\t$, which means that the $\varepsilon_R$ corresponding to these three approximations is at least $\O(\t^2)$, and the total error is
\begin{equation}\label{eq:R:D:L:err:estim}
\O\left(\t+\t^2u\right)\exp\left(-\Big\vert\frac{2m_1}{m_2}\t+\O(\t^2)\Big\vert u\right).
\end{equation}
In contrast, original De Vylder's $\varepsilon_R$ is $\O(\t^3)$, which explains the better observed accuracy of the latter compared to the three former approximations.\\
Let us point out here that, by \eqref{eq:R:D:L:err:estim}, we have obtained an improved, pointwise, estimate for Renyi's approximation error in comparison with the existing one of \cite[Lemma 2.2, \pno 177]{kala:97}, where it is only stated that the supremum norm (with respect to $u$) of the error is $\O(\t)$.

\section{Approximation error for the moments of the time of ruin}\label{sec:err:time}

Let us denote by $t_j$ ($j=1,2,\dots$) the $j^\textrm{th}$ moment of the time of ruin given that ruin occurs:
\begin{equation*}
t_j(u):=\E[T^j\ \big\vert\ T<\infty].
\end{equation*}

\subsection{Main result}

\begin{theorem}\label{thm:DV:err:time}
	Suppose that both reserve models $(U_t)$ and $(\DV{U_t})$ satisfy assumptions (A\ref{assump:CL}), (A\ref{assump:x:exp}) and (A\ref{assump:hazard}).\\
	Let $k\geq 2$, $j\in\{1,2,\dots\}$, and $\ttDV_j(u)$ the $k^\textrm{th}$ order De Vylder type approximation of $t_j(u)$.\\
	Then, 
	\begin{equation*}
	t_j(u)-\ttDV_j(u)=\O\left(\sum_{i=0}^{j} u^{j-i}\t^{1-i-j}\right).
	\end{equation*}
	For the relative error,
	\begin{align}
	\frac{t_j(u)-\ttDV_j(u)}{t_j(u)}&=\O\left(\frac{\sum_{i=0}^{j} u^{i}\t^{i}}{\sum_{i=0}^{j-1}u^{i+1}\t^{i}+\O(1)}\right).\label{eq:rel:err:time}
	\end{align}
\end{theorem}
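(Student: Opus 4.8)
The plan is to pass to the Laplace transform of the time of ruin and then replay, in the discount variable, the exponential-approximation scheme already used for $\psi$. Introduce the transform
\[
\phi(u,\delta):=\E\!\left[e^{-\delta T}\indic_{T<\infty}\right],\qquad \delta\ge 0,
\]
so that $\phi(u,0)=\psi(u)$ and $t_j(u)=(-1)^j\,\partial_\delta^j\phi(u,\delta)\big|_{\delta=0}\,/\,\psi(u)$ for $j\ge 1$. As in \eqref{eq:psi}, $\phi(\cdot,\delta)$ solves a defective renewal equation with the same kernel $f_I/(1+\t)$ but a $\delta$-dependent forcing term, and its solution has a Cram\'er--Lundberg form $\phi(u,\delta)=\bigl(C(\delta)+\O(\t)\bigr)\exp(-\rho(\delta)u)$, where $\rho(\delta)$ is the positive root of the generalized Lundberg equation $\kappa(\rho)=\delta$, with $\kappa(s):=\l\bigl(\E[e^{sZ_1}]-1\bigr)-cs$, so that $\rho(0)=R$ and $C(0)=1+\O(\t)$. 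I would first record this representation together with a remainder bound of the same exponential type as in Lemma~\ref{lem:psi:exp:err}, obtained by running that lemma with $\delta$ as a frozen parameter --- equivalently, by reading $\phi(\cdot,\delta)$ as the ruin probability of an auxiliary, exponentially tilted, compound Poisson model --- and, crucially, I would need the analogous control on the first $j$ partial $\delta$-derivatives of the remainder, uniformly in $u$ and $\t$.

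The second ingredient is the $\t$-scaling of the $\delta$-derivatives at $0$. Differentiating $\kappa(\rho(\delta))=\delta$ gives $\kappa'(R)\rho'(0)=1$; since $c=\l m_1(1+\t)$ and $R=\tfrac{2m_1}{m_2}\t+\O(\t^2)$ by \eqref{eq:R:O:theta}, one finds $\kappa'(R)=\l m_1\t\,(1+\O(\t))=\O(\t)$, hence $\rho'(0)=\O(\t^{-1})$. With $\kappa^{(i)}(R)=\O(1)$ for $i\ge 2$, an induction on the implicit-differentiation identities yields $\rho^{(i)}(0)=\O(\t^{1-2i})$, and a parallel computation gives $C^{(i)}(0)=\O(\t^{1-2i})$. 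Applying Fa\`a di Bruno (Bell polynomials) to $\partial_\delta^j\!\bigl[C(\delta)e^{-\rho(\delta)u}\bigr]\big|_{\delta=0}$, a partition of $\{1,\dots,j\}$ into $m$ blocks contributes a term of order $u^m\t^{m-2j}$; summing over $m=1,\dots,j$, adding the purely $C$-driven, $u$-free term $(-1)^jC^{(j)}(0)=\O(\t^{1-2j})$, and dividing by $\psi(u)=(1+\O(\t))e^{-Ru}$ gives
\[
t_j(u)=\Theta\!\Bigl(\t^{1-2j}\Bigl[\textstyle\sum_{i=0}^{j-1}u^{i+1}\t^{i}+\O(1)\Bigr]\Bigr),
\]
which is exactly the denominator of \eqref{eq:rel:err:time}; the matching lower bound, from positivity of all the coefficients, makes this a genuine $\Theta$.

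For the approximation error I would carry the whole construction for the De Vylder model, every object acquiring a De Vylder superscript. The point is that the $u$-dependent coefficients of $t_j$ (those with $m\ge 1$) are dominated by $C(0)$ times the pure-exponent terms, whereas the $u$-free coefficient is carried solely by $C^{(j)}(0)$. The moment-matching conditions of Lemma~\ref{lem:U:Util} force the leading orders of $\kappa'$, and hence of all the $\delta$-derivatives $\rho^{(i)}(0)$, to agree exactly with their De Vylder counterparts --- just as \eqref{eq:lmt:lmttil} gives $\l m_1\t=\lDV\mDV_1\tDV$ --- so that, upgrading the argument of Lemma~\ref{lem:R:Rtil}, each derivative $\rho^{(i)}(0)$ and the constant $C(0)$ is reproduced up to one extra power of $\t$; subtracting the two Fa\`a di Bruno expansions term by term then replaces each coefficient of order $u^m\t^{m-2j}$ with $m\ge 1$ by an $\O(u^m\t^{m+1-2j})$ difference. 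The $u$-free term, by contrast, is only controlled crudely, by the triangle inequality, at its full order $\O(\t^{1-2j})$, with no guaranteed cancellation. Collecting $m=0,\dots,j$ and reindexing $i=j-m$ produces $t_j(u)-\ttDV_j(u)=\O\bigl(\sum_{i=0}^{j}u^{j-i}\t^{1-i-j}\bigr)$, and dividing by the $\Theta$-estimate of $t_j(u)$ yields \eqref{eq:rel:err:time}.

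The hard part is the interplay of the last two steps: propagating the moment-matched accuracy through the repeated $\delta$-differentiation. What the bound actually requires is not the full strength $\O(\t^k)$ of Lemma~\ref{lem:R:Rtil}, but only a uniform relative accuracy $\O(\t)$ of every derivative $\rho^{(i)}(0)$ and of $C(0)$ for $i\le j$, which is what cancels the leading $u^j\t^{-j}$ travel-time behaviour of $t_j$ and of $\ttDV_j$ down to an $\O(u^j\t^{1-j})$ remainder; isolating exactly this one-power gain, while keeping the $u$-free coefficient at its full order $\t^{1-2j}$ (this is the reason the relative error fails to vanish as $\t\to0$ at $u=0$), is the delicate bookkeeping. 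Equally delicate is the analytic control of the $\delta$-derivatives of the Cram\'er--Lundberg remainder in the first step, uniformly in $u$ and $\t$, which is what legitimises differentiating the exponential representation $j$ times under the $\O(\cdot)$.
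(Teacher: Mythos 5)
Your route is genuinely different from the paper's. The paper never introduces the discounted quantity $\phi(u,\delta)$: it takes the Lin--Willmot recursive integral formulas for $t_j(u)$ (their Corollary 6.1 and Theorem 6.3), which express $t_j$ directly in terms of $\psi$ alone, substitutes the single estimate $\psi(u)=(1+\O(\t))\exp(-Ru)$ from Lemma \ref{lem:psi:exp:err}, and then inducts on $j$ to obtain $t_j(u)=\frac{1+\O(\t)}{(\l m_1\t)^j}\sum_{i=0}^{j-1}C_{i,j}u^{j-i}R^{-i}+\frac{1}{(\l m_1\t)^{j-1}}\O(R^{-j})$ with positive universal constants $C_{i,j}$; the error then reduces, via Lemma \ref{lem:U:Util} (which makes the prefactors $\l m_1\t$ and $m_2/(m_1\t)$ literally identical in the two models) and Lemma \ref{lem:R:Rtil}, to differences $R^{-i}-(\RDV)^{-i}=\O(\t^{k-1-i})$ plus remainders bounded by the triangle inequality. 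Your Fa\`a di Bruno bookkeeping in $\delta$ reproduces exactly the same term-by-term structure (your $u^m\t^{m-2j}$ scale is the paper's $u^{j-i}\t^{-j-i}$ after reindexing), and your observation that only an $\O(\t)$ relative matching of $\rho'(0)=1/\kappa'(R)$ is needed for the $u$-dependent terms, while the $u$-free term is taken crudely at $\O(\t^{1-2j})$, is precisely the cancellation pattern the paper exploits.

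However, as written your argument has a genuine gap at its analytic core, one you flag yourself but do not fill: the uniform control, in $u$ and $\t$, of $\partial_\delta^i$ of the remainder in $\phi(u,\delta)=\bigl(C(\delta)+\O(\t)\bigr)e^{-\rho(\delta)u}$. Running Lemma \ref{lem:psi:exp:err} ``with $\delta$ frozen'' bounds the remainder for each $\delta$, but a bound on a function gives no bound on its $\delta$-derivatives; and the natural complex-analytic fix is delicate here because $\rho(\delta)$ has a branch point at $\delta=\kappa(\rho^*)$ with $\kappa'(\rho^*)=0$, which sits at distance only $\O(\t^2)$ from the origin, so Cauchy estimates must be run on a $\t$-dependent disk and every constant retraced. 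The alternative fix --- differentiating the defective renewal equation for $\phi(\cdot,\delta)$ $j$ times at $\delta=0$ and solving the resulting hierarchy --- is exactly how the Lin--Willmot formulas arise, i.e.\ it collapses your proof into the paper's. Two smaller points: (i) the leading coefficient of $\rho^{(i)}(0)$ for $i\ge 2$ is $c_i(\l m_2)^{i-1}(\l m_1\t)^{1-2i}(1+\O(\t))$, so its matching uses $\l m_2=\lDV\mDV_2$ and not only $\kappa'$; it is fortunate (and needs to be checked, since $j$ may exceed $k$) that $\kappa^{(i)}(R)\approx\l m_i$ for $i\ge 3$, which is \emph{not} matched when $i>k$, enters only at relative order $\O(\t)$ --- your ``hence'' elides this. (ii) The lower bound on $t_j(u)$ ``from positivity of all the coefficients'' is asserted, not proved; the raw Fa\`a di Bruno coefficients are not all of one sign ($\rho''(0)<0$), so the positivity of the assembled coefficients of $u^m$ requires an argument (the paper gets it from the positivity of the $C_{i,j}$ in its inductive expansion).
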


The bound on the relative error \eqref{eq:rel:err:time} in Theorem \ref{thm:DV:err:time} is at least $\O(1)$ for small $u$, and one can check that this rate is optimal for $u=0$ by easy explicit computations. On the other hand, when $u\to\infty$, the bound in \eqref{eq:rel:err:time} becomes equal to $\O(\t)$.\\
These two different behaviours of the error, for small $u$ and for large $u$, were already numerically pointed out by \cite{dick:wong:04} for De Vylder approximation of the moments of the time of ruin. They are confirmed by our numerical illustrations in Figures \ref{fig:DV:approx:time} and \ref{fig:DV:err:time}.

\begin{figure}[!htbp]
		\centering
		\includegraphics[width=0.8\textwidth]{./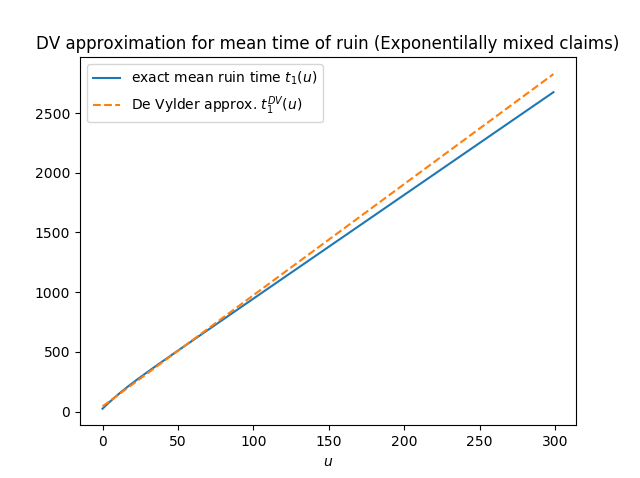}
		\caption{Exact value and De Vylder approximation for the average time of ruin, as functions of $u$ (with $\t=10\%$). The claims density is $a\beta_1\exp(-\beta_1x)+(1-a)\beta_2\exp(-\beta_2x)$, with $a=0.01$, $\beta_1=0.1$, $\beta_2=0.6$ (see \cite{lin:will:00} for explicit expressions).}
	\label{fig:DV:approx:time}
\end{figure}

\begin{figure}[!htbp]
	\centering
	\begin{subfigure}{0.8\textwidth}
		\centering
		\includegraphics[width=\textwidth]{./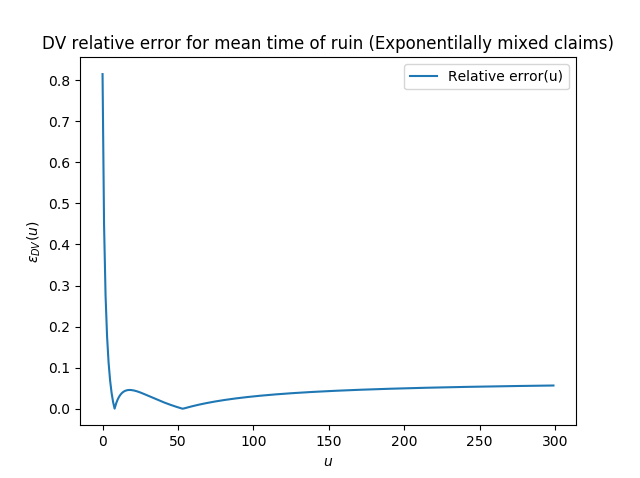}
		\caption{Relative error, as a function of $u$ (with $\t=10\%$)}
	\end{subfigure}
	\begin{subfigure}{0.8\textwidth}
		\centering
		\includegraphics[width=\textwidth]{./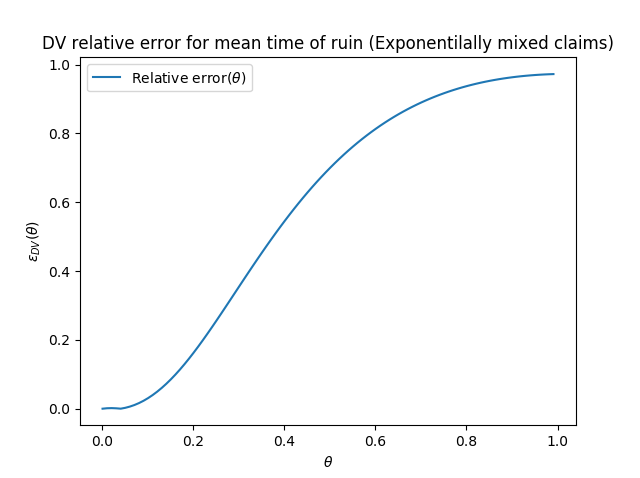}
		\caption{Relative error, as a function of $\t$ (with $u=100$)}
	\end{subfigure}
	\caption{De Vylder's approximation relative error for the average time of ruin. The claims density is the same as in Figure \ref{fig:DV:approx:time}.}
	\label{fig:DV:err:time}
\end{figure}

\subsection{Proof of Theorem \ref{thm:DV:err:time}}

\begin{proof}[Proof of Theorem \ref{thm:DV:err:time}]
	Let us begin with the first moment. From \cite[Corollary 6.1]{lin:will:00}, we have
	\begin{equation*}
	t_1(u)=\frac{1}{\lambda m_1\t}\left(\int_0^u\frac{\psi(u-x)\psi(x)}{\psi(u)}\dx+\int_u^\infty\frac{\psi(x)}{\psi(u)}\dx-\frac{m_2}{2m_1\t}\right).
	\end{equation*}
	From Lemma \ref{lem:psi:exp:err}, we have $\psi(u)=(1+\O(\t))\exp(-Ru)$. Then,
	\begin{align}
	t_1(u)&=\frac{1}{\lambda m_1\t}\left(\int_0^u(1+\O(\t))\dx+\int_u^\infty(1+\O(\t))\exp(-R(x-u))\dx-\frac{m_2}{2m_1\t}\right)\nonumber\\
	&=\frac{1}{\lambda m_1\t}\left(u+R^{-1}-\frac{m_2}{2m_1\t}\right)+\O(1)\left(u+R^{-1}\right).\label{eq:e:1:u:R}
	\end{align}
	Similarly,
	\begin{equation*}
	\ttDV_1(u)=\frac{1}{\lDV \mDV_1\tDV}\left(u+(\RDV)^{-1}-\frac{\mDV_2}{2\mDV_1\tDV}\right)+\O(1)\left(u+(\RDV)^{-1}\right),
	\end{equation*}
	which equals, using Lemma \ref{lem:U:Util},
	\begin{equation*}
	\ttDV_1(u)=\frac{1}{\lambda m_1\t}\left(u+(\RDV)^{-1}-\frac{m_2}{2m_1\t}\right)+\O(1)\left(u+(\RDV)^{-1}\right).
	\end{equation*}
	Thus,
	\begin{equation*}
	t_1(u)-\ttDV_1(u)=\frac{1}{\lambda m_1\t}\left(R^{-1}-(\RDV)^{-1}\right)+\O(1)\left(u+R^{-1}+(\RDV)^{-1}\right).
	\end{equation*}
	Now, by \eqref{eq:R:O:theta}, we have
	\begin{equation}\label{eq:R:-i}
	\frac{1}{R^i}=\O(\t^{-i})
	\end{equation}
	and, by Lemma \ref{lem:R:Rtil},
	\begin{equation}\label{eq:R:RDV:-i}
	\frac{1}{R^i}-\frac{1}{(\RDV)^i}=\frac{(\RDV)^i-R^i}{(R\RDV)^i}=\O(\frac{\t^k\t^{i-1}}{\t^{2i}})=\O(\t^{k-1-i}).
	\end{equation}
	Therefore,
	\begin{equation*}
	t_1(u)-\ttDV_1(u)=\O(\t^{k-3}+u+\t^{-1})=\O(u+\t^{-1}).
	\end{equation*}
	Besides, by \eqref{eq:R:O:theta}, we get that $R^{-1}-\frac{m_2}{2m_1\t}=\O(1)$, which, plugged in \eqref{eq:e:1:u:R}, gives
	$$t_1(u)\geq C(u+\O(1))\t^{-1},$$
	where $C$ is a positive constant. Therefore
	\begin{equation*}
	\frac{t_1(u)-\ttDV_1(u)}{t_1(u)}=\O\left(\frac{\t u+1}{u+\O(1)}\right).
	\end{equation*}
	We now investigate the approximation of higher moments. From \cite[Theorem 6.3]{lin:will:00}, we have, for $j=1,2,\dots$,
	\begin{equation*}
	t_j(u)=\frac{j}{\lambda m_1\t}\left(\int_0^u\frac{\psi(u-x)\psi(x)}{\psi(u)}t_{j-1}(x)\dx+\int_u^\infty\frac{\psi(x)}{\psi(u)}t_{j-1}(x)\dx-\int_0^\infty\psi(x)t_{j-1}(x)\dx\right),
	\end{equation*}
	Still by Lemma \ref{lem:psi:exp:err}, we get
	\begin{align*}
	t_j(u)&=\frac{j}{\lambda m_1\t}\left(\int_0^u t_{j-1}(x)\dx+\int_0^\infty \exp(-Rx)\left(t_{j-1}(u+x)-t_{j-1}(x)\right)\dx\right)\\
	&+\O(1)\left(\int_0^u t_{j-1}(x)\dx+\int_0^\infty \exp(-Rx)\left(t_{j-1}(u+x)+t_{j-1}(x)\right)\dx\right).
	\end{align*}
	By induction on $j=1,2,\dots$, it is not difficult to deduce that
	\begin{align*}
	t_j(u)&=\frac{(1+\O(\t))}{(\lambda m_1\t)^j}\sum_{i=0}^{j-1}C_{i,j} u^{j-i}R^{-i}+\frac{1}{(\lambda m_1\t)^{j-1}}\O(R^{-j}),
	\end{align*}
	where $(C_{i,j})$ are positive universal constants. As for the first moment, we then have
	\begin{align*}
	t_j(u)-\ttDV_j(u)&=\frac{1}{(\lambda m_1\t)^j}\sum_{i=0}^{j-1}C_{i,j} u^{j-i}(R^{-i}-(\RDV)^{-i})\\
	&+\frac{1}{(\lambda m_1\t)^{j-1}}\O\left(\sum_{i=0}^{j-1}u^{j-i}\t^{-i}+\t^{-j}\right)\\
	&=\O\left(\sum_{i=0}^{j-1}u^{j-i}\t^{-j+k-1-i}+\sum_{i=0}^{j}u^{j-i}\t^{-j+1-i}\right)\\
	&=\O\left(\sum_{i=0}^{j} u^{j-i}\t^{1-i-j}\right),
	\end{align*}
	and
	\begin{align*}
	\frac{t_j(u)-\ttDV_j(u)}{t_j(u)}&=\O\left(\frac{\sum_{i=0}^{j} u^{j-i}\t^{1-i-j}}{\sum_{i=0}^{j-1}u^{j-i}\t^{-j-i}+\O(\t^{-2j+1})}\right)\\
	&=\O\left(\frac{\sum_{i=0}^{j} u^{i}\t^{i}}{\sum_{i=0}^{j-1}u^{i+1}\t^{i}+\O(1)}\right).
	\end{align*}
\end{proof}

\section{Approximation error for the moments of the deficit at ruin}\label{sec:err:deficit}

The deficit at ruin is defined by $\vert U_T\vert$. Let us denote by $d_j$ ($j=1,2,\dots$) its $j^\textrm{th}$ moment given that ruin occurs:
\begin{equation*}
d_j(u):=\E[\vert U_T\vert^j\ \big\vert\ T<\infty].
\end{equation*}

\subsection{Main result}

It turns out that, while De Vylder's approximation fits the first moments of the surplus process, it fails to accurately approximate the moments of the surplus (deficit) at ruin!
\begin{theorem}\label{thm:DV:err:deficit}
	Suppose that both reserve models $(U_t)$ and $(\DV{U_t})$ satisfy assumptions (A\ref{assump:CL}), (A\ref{assump:x:exp}) and (A\ref{assump:hazard}).\\
	Let $k\geq 2$, $j\in\{1,\dots,k-1\}$, and $\dDV_j(u)$ the $k^\textrm{th}$ order De Vylder type approximation of $d_j(u)$.\\
	Then, 
	\begin{equation*}
	d_j(u)-\dDV_j(u)
	=\O(1).
	\end{equation*}
	For the relative error,
	\begin{equation}
	\frac{d_j(u)-\dDV_j(u)}{d_j(u)}
	=\O(1).
	\label{eq:DV:err:deficit}
	\end{equation}
\end{theorem}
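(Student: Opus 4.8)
The plan is to find an explicit renewal-type formula for the moments $d_j(u)$ of the deficit at ruin, substitute the Cramer-Lundberg type approximation $\psi(u)=(1+\O(\t))\exp(-Ru)$ from Lemma \ref{lem:psi:exp:err}, and track the leading order in $\t$ and $u$ of both $d_j(u)$ and $\dDV_j(u)$, exactly as was done for the time of ruin in Theorem \ref{thm:DV:err:time}. First I would recall that the joint distribution of $(U_{T^-},|U_T|)$ conditional on ruin is known (see \cite{lin:will:99}), and in particular the conditional defective density of the deficit $|U_T|$ factors through $f_I$ and $\psi$. The expected starting point is a Lin--Willmot representation of the form
\begin{equation*}
d_j(u)=\frac{1}{\psi(u)}\left(\int_0^u \psi(u-x)\,g_j(x)\,\dx+\int_u^\infty h_j(x)\,\dx\right),
\end{equation*}
where $g_j$ and $h_j$ are explicit functionals of $f_I$ and the moments $m_i$ (in fact the $j^{\textrm{th}}$ moment of a single $F_I$-claim overshoot), none of which carry an inverse power of $\t$.

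Next I would feed in the Lundberg estimate $\psi(u)=(1+\O(\t))\exp(-Ru)$ to evaluate both integrals up to relative error $\O(\t)$, as in the proof of Theorem \ref{thm:DV:err:time}. The crucial structural observation is that, unlike the time of ruin, the deficit moments $d_j(u)$ are bounded quantities that do \emph{not} blow up as $\t\to 0$: they converge to finite limits (the moments of the overshoot of the ascending ladder height distribution $F_I$), so $d_j(u)=\O(1)$ from below and above. This is precisely what makes the relative error $\O(1)$ rather than small: the numerator $d_j(u)-\dDV_j(u)$ is a difference of two $\O(1)$ quantities whose leading-order terms need \emph{not} cancel. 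The key point is that, whereas matching the first $k$ moments of $U_t$ forces $R-\DV R=\O(\t^k)$ (Lemma \ref{lem:R:Rtil}) and forces $\mtil_j=(\mtil_2/m_2)m_j$, it does \emph{not} force the integrated-tail moments governing $d_j$ to agree: the leading-order limit of $d_j(u)$ depends on the shape of $F_I$ through ratios like $m_{j+1}/m_1$ that are generally \emph{not} preserved by the De Vylder fit (the fit rescales moments by a common factor $\mtil_2/m_2$, which cancels in $\ttil$ and $R$ but survives in the deficit moments).

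Concretely, I would compute the $\t\to 0$ leading term of $d_j(u)$ explicitly — expected to be of the form $d_j(u)\to m_{j+1}/((j+1)m_1)$ or a similar ratio of claim moments built from $F_I$ — and likewise for $\dDV_j(u)$, obtaining a ratio of the $\DV$-moments. Since Lemma \ref{lem:U:Util} gives $\mDV_i=(\mDV_2/m_2)m_i$, the De Vylder limit becomes $(\mDV_2/m_2)\cdot m_{j+1}/((j+1)m_1\cdot\mDV_1/\cdots)$, and the mismatch factor is a fixed nonzero constant $\neq 1$ in general (this is where $j\le k-1$ enters: for $j\ge k$ the moments were never matched at all, but even for $j\le k-1$ the \emph{deficit} moment is a different functional than the reserve moment). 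The leading terms therefore differ by an $\O(1)$ constant, yielding both $d_j(u)-\dDV_j(u)=\O(1)$ and the relative error $=\O(1)$. The main obstacle I anticipate is bookkeeping: one must carry the expansion to high enough order to be sure the $\O(1)$ leading terms genuinely \emph{fail} to cancel (so that the bound is not secretly $o(1)$), and one must verify uniformity in $u$, i.e. that the $\O(1)$ bound holds for all $u\ge0$ and not merely in the limit $u\to\infty$ — which requires controlling the finite-$u$ correction terms in the Lin--Willmot formula via the exponential decay of $\psi$, just as in Theorem \ref{thm:DV:err:time}.
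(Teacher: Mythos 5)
Your overall strategy --- start from a Lin--Willmot representation of $d_j(u)$, substitute $\psi(u)=(1+\O(\t))\exp(-Ru)$ from Lemma \ref{lem:psi:exp:err}, and compare leading terms via Lemmas \ref{lem:U:Util} and \ref{lem:R:Rtil} --- is the paper's strategy. But there is a genuine gap at the step you dismiss as a ``crucial structural observation''. The representation actually available (Lin--Willmot, Corollary 4.1) is
\begin{equation*}
d_j(u)=\frac{m_j}{m_1\t}\frac{\tau_j(u)}{\psi(u)}-\frac{m_{j+1}}{(j+1)m_1\t},
\end{equation*}
which after inserting the Lundberg estimate becomes
\begin{equation*}
d_j(u)=(1+\O(\t))\Bigl(\frac{j!}{R^j}-\sum_{i=0}^{j-2}\binom{j}{i}\frac{m_{j-i}}{m_1\t}\frac{i!}{R^{i+1}}\Bigr)-\frac{m_{j+1}}{(j+1)m_1\t}.
\end{equation*}
Since $R=\O(\t)$, the individual terms are of size $\t^{-j}$, $\t^{-i-2}$ and $\t^{-1}$: contrary to your premise that the functionals involved ``carry no inverse power of $\t$'', the formula is saturated with them, and the boundedness of $d_j(u)$ as $\t\to 0$ is not visible from the representation --- it is the outcome of a cancellation. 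The paper's proof spends most of its effort exactly there: it isolates the combination $B=\frac{j!}{R^j}-\sum_{i=0}^{j-2}\binom{j}{i}\frac{m_{j-i}}{m_1\t}\frac{i!}{R^{i+1}}$, rewrites it as $\frac{j!}{R^j\t}\bigl(1+\t-\int_0^\infty\sum_{i=0}^{j-1}\frac{(Rx)^i}{i!}f_I(x)\dx\bigr)$, and uses the defining relation $\int_0^\infty e^{Rx}f_I(x)\dx=1+\t$ together with the two-sided Taylor bound $e^z-\frac{z^j}{j!}e^z\le\sum_{i=0}^{j-1}\frac{z^i}{i!}\le e^z-\frac{z^j}{j!}$ to get $B=\O(\t^{-1})$, so that its contribution $\O(\t)B$ is $\O(1)$; a parallel computation with $j+1$ in place of $j$ yields the positive lower bound $d_j(u)\ge C+\O(\t)$ required for the relative error. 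Your plan contains no substitute for this: ``the deficit moments converge to finite limits'' is precisely what must be proved, not assumed. (A renewal equation for the unconditional moment $\E[\vert U_T\vert^j\indic_{T<\infty}]$ with a $\t$-free forcing term does exist and would be an alternative route, but you would then have to solve it via the compound-geometric/$\psi'$ formula of Lemma \ref{lem:psi:psitil} and still establish uniform two-sided bounds after dividing by $\psi(u)$; you have done neither.)

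A second, smaller issue: the hypothesis $j\le k-1$ enters at a specific point, namely the term collecting the differences $R^{-i}-(\RDV)^{-i}=\O(\t^{k-1-i})$ is $\O(\t^{k-1-j})$, which is $\O(1)$ only when $j\le k-1$; likewise Assumption (A\ref{assump:x:exp}) with exponent $k$ is what makes $\int_0^\infty x^{j+1}f_I(x)\dx$ and $\int_0^\infty x^je^{Rx}f_I(x)\dx$ finite. Your explanation of the role of $j\le k-1$ (``for $j\ge k$ the moments were never matched'') misses where it is actually used. On the other hand, your heuristic for why the error is not $o(1)$ --- the ratio $m_{j+1}/m_1$ is not preserved because $\mDV_1$ is free while $\mDV_{j+1}$ is rescaled by $\mDV_2/m_2$ --- is correct and matches the paper's remark at $u=0$, but it concerns optimality, which is not part of the theorem's claim.
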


	The relative error estimate \eqref{eq:DV:err:deficit} is optimal at least for small $u$. Indeed, for $u=0$, the density of the deficit at ruin is known to be exactly $f_I$ (see \cite{kaas:goov:dhae:denu:08}), and then $d_j(0)=\frac{m_{j+1}}{(j+1)m_1}$. Hence, by Lemma \ref{lem:U:Util}, $d_j(0)$ cannot "cancel" or be compared with $\dDV_j(0)$, so that the relative error is exactly $\O(1)$ (and not smaller). Estimate \eqref{eq:DV:err:deficit}, for all $u$ and $\t$, is confirmed by numerical experiments illustrated in Figures \ref{fig:DV:approx:deficit} and \ref{fig:DV:err:deficit}.

\begin{figure}[!htbp]
	\centering
	\includegraphics[width=0.8\textwidth]{./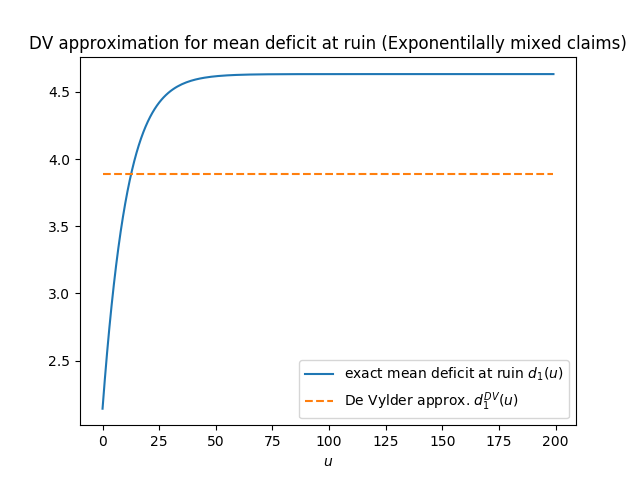}
		\caption{Exact value and De Vylder approximation for the average deficit at ruin, as functions of $u$ (with $\t=10\%$). The claims density is $a\beta_1\exp(-\beta_1x)+(1-a)\beta_2\exp(-\beta_2x)$, with $a=0.01$, $\beta_1=0.1$, $\beta_2=0.6$ (see \cite{lin:will:00} for explicit expressions).}
		\label{fig:DV:approx:deficit}
\end{figure}

\begin{figure}[!htbp]
	\centering
	\begin{subfigure}{0.8\textwidth}
		\centering
		\includegraphics[width=\textwidth]{./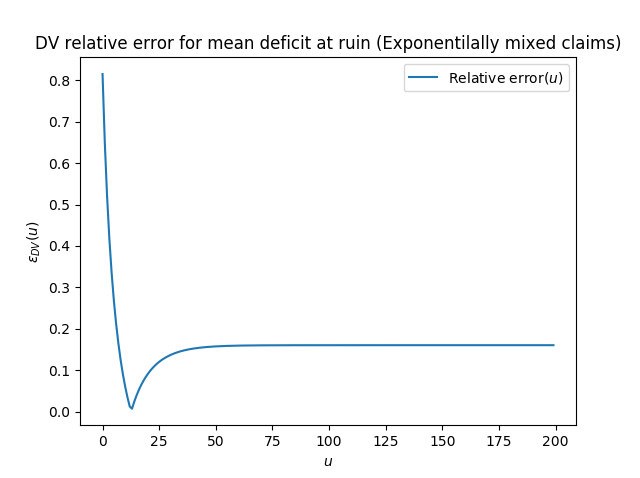}
		\caption{Relative error, as a function of $u$ (with $\t=10\%$)}
	\end{subfigure}
	\begin{subfigure}{0.8\textwidth}
		\centering
		\includegraphics[width=\textwidth]{./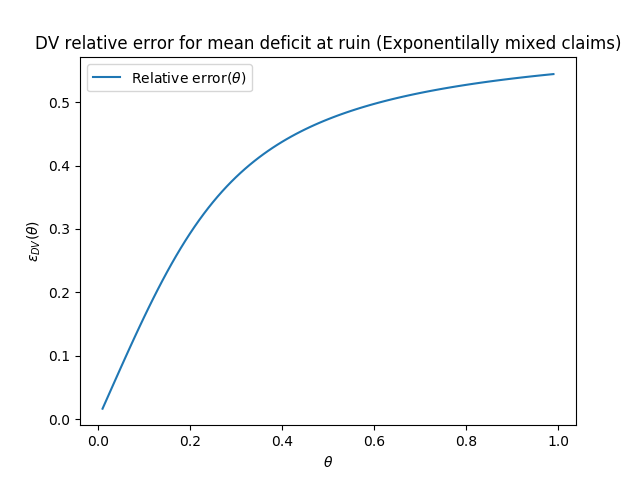}
		\caption{Relative error, as a function of $\t$ (with $u=100$)}
	\end{subfigure}
	\caption{De Vylder's approximation relative error for the average deficit at ruin. The claims density is the same as in Figure \ref{fig:DV:approx:deficit}.}
	\label{fig:DV:err:deficit}
\end{figure}

\subsection{Proof of Theorem \ref{thm:DV:err:deficit}}

\begin{proof}[Proof of Theorem \ref{thm:DV:err:deficit}]
From \cite[Corollary 4.1]{lin:will:00}, we know that
\begin{equation*}
d_j(u)=\frac{m_j}{m_1\t}\frac{\tau_j(u)}{\psi(u)}-\frac{m_{j+1}}{(j+1)m_1\t},
\end{equation*}
where
\begin{equation*}
\tau_j(u)=\frac{jm_1\t}{m_j}\int_u^\infty(x-u)^{j-1}\psi(x)\dx-\sum_{i=0}^{j-2}\binom{j}{i}\frac{m_{j-i}}{m_j}\int_u^\infty(x-u)^i\psi(x)\dx.
\end{equation*}
From Lemma \ref{lem:psi:exp:err}, we have $\psi(u)=(1+\O(\t))\exp(-Ru)$. Then,
\begin{align*}
d_j(u)&=(1+\O(\t))\left(j\int_u^\infty(x-u)^{j-1}\exp(-R(x-u))\dx\right.\\
&-\left.\sum_{i=0}^{j-2}\binom{j}{i}\frac{m_{j-i}}{m_1\t}\int_u^\infty(x-u)^i\exp(-R(x-u))\dx\right)-\frac{m_{j+1}}{(j+1)m_1\t}.
\end{align*}
By the change of variable $y=R(x-u)$, we get
\begin{equation}\label{eq:dj:R}
d_j(u)=(1+\O(\t))\left(\frac{j!}{R^j}-\sum_{i=0}^{j-2}\binom{j}{i}\frac{m_{j-i}}{m_1\t}\frac{i!}{R^{i+1}}\right)-\frac{m_{j+1}}{(j+1)m_1\t}.
\end{equation}
Similarly, we have
\begin{equation}\label{eq:djDV:RDV}
\dDV_j(u)=(1+\O(\tDV))\left(\frac{j!}{(\RDV)^j}-\sum_{i=0}^{j-2}\binom{j}{i}\frac{\mDV_{j-i}}{\mDV_1\tDV}\frac{i!}{(\RDV)^{i+1}}\right)-\frac{\mDV_{j+1}}{(j+1)\mDV_1\tDV}.
\end{equation}
By Lemma \ref{lem:U:Util} and for $i=2,\dots,k$,
$$\frac{\mDV_i}{\mDV_1\tDV}=\frac{\frac{\mDV_2}{m_2}m_i}{\mDV_1\frac{m_1\mDV_2}{m_2\mDV_1}\t}=\frac{m_i}{m_1\t}.$$
Thus, for $j=1,\dots,k-1$, \eqref{eq:dj:R} and \eqref{eq:djDV:RDV} yield
\begin{align}
d_j(u)-\dDV_j(u)&=(1+\O(\t))\left(j!(\frac{1}{R^j}-\frac{1}{(\RDV)^j})-\sum_{i=0}^{j-2}\binom{j}{i}\frac{m_{j-i}}{m_1\t}i!(\frac{1}{R^{i+1}}-\frac{1}{(\RDV)^{i+1}})\right)\nonumber\\
&+\O(\t)\left(\frac{j!}{R^j}-\sum_{i=0}^{j-2}\binom{j}{i}\frac{m_{j-i}}{m_1\t}\frac{i!}{R^{i+1}}\right)\nonumber\\
&:=(1+\O(\t))A+\O(\t)B.\label{eq:dj:djDV}
\end{align}
Using \eqref{eq:R:-i} and \eqref{eq:R:RDV:-i},
\begin{equation*}
A=\O(\t^{k-1-j}).
\end{equation*}
The estimate of the term $B$ is more tricky. We have
\begin{align*}
B&=\frac{j!}{R^j}-\sum_{i=0}^{j-2}\binom{j}{i}\frac{m_{j-i}}{m_1\t}\frac{i!}{R^{i+1}}\\
&=\frac{j!}{R^j\t}\left(\t-\sum_{i=0}^{j-2}\frac{m_{j-i}}{m_1(j-i)!}R^{j-i-1}\right)\\
&=\frac{j!}{R^j\t}\left(\t-\sum_{i=1}^{j-1}\frac{m_{i+1}}{m_1(i+1)!}R^{i}\right)\\
&=\frac{j!}{R^j\t}\left(1+\t-\sum_{i=0}^{j-1}\frac{m_{i+1}}{m_1(i+1)}\frac{R^{i}}{i!}\right)\\
&=\frac{j!}{R^j\t}\left(1+\t-\int_0^\infty\sum_{i=0}^{j-1}\frac{R^{i}}{i!}x^if_I(x)\dx\right).
\end{align*}
Using the following straightforward inequality (that holds for any $z\geq 0$):
\begin{equation}\label{eq:ineq:expo:taylor}
\exp(z)-\frac{z^j}{j!}\exp(z)\leq\sum_{i=0}^{j-1}\frac{z^i}{i!}\leq \exp(z)-\frac{z^j}{j!},
\end{equation}
we get (with $Rx$ playing the role of $z$)
\begin{align*}
B&\geq\frac{j!}{R^j\t}\left(1+\t-\int_0^\infty (\exp(Rx)-\frac{R^j}{j!}x^j)f_I(x)\dx\right)\\
&=\frac{1}{\t}\int_0^\infty x^j f_I(x)\dx
\end{align*}
(using \eqref{eq:def:R}), and
\begin{align*}
B&\leq \frac{j!}{R^j\t}\left(1+\t-\int_0^\infty (\exp(Rx)-\frac{R^j}{j!}x^j\exp(Rx))f_I(x)\dx\right)\\
&=\frac{1}{\t}\int_0^\infty x^j\exp(Rx)f_I(x)\dx.
\end{align*}
By Assumption (A\ref{assump:x:exp}), we obtain
\begin{equation*}
B=\O(\t^{-1}).
\end{equation*}
Back to \eqref{eq:dj:djDV}, we deduce that
\begin{equation*}
d_j(u)-\dDV_j(u)=(1+\O(\t))\O(\t^{k-1-j})+\O(\t)\O(\t^{-1})=\O(1).
\end{equation*}
For a lower bound on $d_j(u)$, we have from \eqref{eq:dj:R} (and like for the term $B$ above)
\begin{align*}
d_j(u)&=(1+\O(\t))\frac{j!}{R^j\t}\left(\t-\sum_{i=1}^{j}\frac{m_{i+1}}{m_1(i+1)!}R^{i}\right)\\
&=(1+\O(\t))\frac{j!}{R^j\t}\left(1+\t-\int_0^\infty\sum_{i=0}^{j}\frac{R^{i}}{i!}x^if_I(x)\dx\right).
\end{align*}
Then, again by \eqref{eq:ineq:expo:taylor}, \eqref{eq:def:R} and \eqref{eq:R:O:theta},
\begin{align*}
d_1(u)&\geq (1+\O(\t))\frac{j!}{R^j\t}\left(1+\t-\int_0^\infty (\exp(Rx)-\frac{R^{j+1}}{(j+1)!}x^{j+1})f_I(x)\dx\right)\\
&=(1+\O(\t))\frac{R}{j\t}\int_0^\infty x^{j+1} f_I(x)\dx\\
&=\frac{(1+\O(\t))}{j}(\frac{2m_1}{m_2}+\O(\t))\int_0^\infty x^{j+1} f_I(x)\dx\\
&=C+\O(\t)
\end{align*}
(with $C$ a positive constant). Thus,
\begin{equation*}
\frac{d_j(u)-\dDV_j(u)}{d_j(u)}=\O(\frac{1}{C+\O(\t)})=\O(1).
\end{equation*}
\end{proof}

\section{Approximation error for the moments of the surplus before ruin}\label{sec:err:surplus}

The surplus before ruin is defined by $U_{T-}$. Let us denote by $s_j$ ($j=1,2,\dots$) its $j^\textrm{th}$ moment given that ruin occurs:
\begin{equation*}
s_j(u):=\E[U_{T-}^j\ \big\vert\ T<\infty].
\end{equation*}

\subsection{Main result}

Like the approximation of the moments of the deficit at ruin (Theorem \ref{thm:DV:err:deficit}), De Vylder type methods also fail to accurately approximate the moments of the surplus before ruin!

\begin{theorem}\label{thm:DV:err:surplus}
	Suppose that both reserve models $(U_t)$ and $(\DV{U_t})$ satisfy assumptions (A\ref{assump:CL}), (A\ref{assump:x:exp}) and (A\ref{assump:hazard}).\\
	Let $k\geq 2$, $j\in\{1,\dots,k-1\}$, and $\sDV_j(u)$ the $k^\textrm{th}$ order De Vylder type approximation of $s_j(u)$.\\
	Then, 
	\begin{equation*}
	s_j(u)-\sDV_j(u)=\O(1).
	\end{equation*}
	For the relative error,
	\begin{equation}
	\frac{s_j(u)-\sDV_j(u)}{s_j(u)}=\O(1).\label{eq:DV:err:surplus}
	\end{equation}
\end{theorem}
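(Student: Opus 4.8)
The plan is to reduce $s_j(u)$ to a transparent integral representation, show that $s_j$ and $\sDV_j$ are both $\O(1)$ uniformly in $u$, and check that $s_j$ stays bounded away from $0$; the $\O(1)$ relative error then follows immediately. The starting point is the (defective) marginal density of the surplus before ruin, which for the classical model is
\[
f_{U_{T-}}(x;u)=\frac{1}{1+\t}f_I(x)\big(\indic_{x>u}+\psi(u-x)\indic_{x\le u}\big)
\]
(it integrates to $\psi(u)$ by the renewal equation \eqref{eq:psi}; see \cite{lin:will:00}). Integrating $x^j$ against it and dividing by $\psi(u)$ gives
\[
s_j(u)=\frac{1}{(1+\t)\psi(u)}\left(\int_u^\infty x^j f_I(x)\dx+\int_0^u x^j\psi(u-x)f_I(x)\dx\right).
\]
Unlike the deficit formula, which involves only the tail integral $\int_u^\infty$, here an extra convolution term over $[0,u]$ appears, and handling it will be the crux.

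Next I would insert the Lundberg estimate $\psi(u)=(1+\O(\t))\exp(-Ru)$ of Lemma \ref{lem:psi:exp:err}, applied both to $\psi(u)$ and, for $0\le x\le u$, to $\psi(u-x)$. The exponential factors combine exactly via $\exp(Ru)\exp(-R(u-x))=\exp(Rx)$, and the routine $(1+\O(\t))$ bookkeeping yields
\[
s_j(u)=\frac{1+\O(\t)}{1+\t}\left(\exp(Ru)\int_u^\infty x^j f_I(x)\dx+\int_0^u x^j\exp(Rx)f_I(x)\dx\right).
\]
On $\{x\ge u\}$ one has $\exp(Ru)\le\exp(Rx)$, so the first bracketed term is at most $\int_u^\infty x^j\exp(Rx)f_I(x)\dx$, and both terms are dominated by $\int_0^\infty x^j\exp(Rx)f_I(x)\dx$, which is $\O(1)$ by (A\ref{assump:x:exp}) since $j\le k-1$. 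Hence $s_j(u)=\O(1)$ uniformly in $u$. The same computation for the De Vylder model, which satisfies (A\ref{assump:CL}), (A\ref{assump:x:exp}) and (A\ref{assump:hazard}) and has $\tDV=\O(\t)$ by Lemma \ref{lem:U:Util} and Remark \ref{rem:DV:param}, gives $\sDV_j(u)=\O(1)$. Subtracting, $s_j(u)-\sDV_j(u)=\O(1)$.

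For the relative error I would bound $s_j(u)$ below uniformly in $u$ and $\t$. Since $\exp(Ru)\ge1$ and $\exp(Rx)\ge1$, the two terms in the last display are at least $\int_u^\infty x^j f_I(x)\dx$ and $\int_0^u x^j f_I(x)\dx$, whose sum is exactly $\int_0^\infty x^j f_I(x)\dx=\frac{m_{j+1}}{(j+1)m_1}>0$. Therefore $s_j(u)\ge\frac{1+\O(\t)}{1+\t}\cdot\frac{m_{j+1}}{(j+1)m_1}\ge C>0$ for $\t$ in a bounded interval, and the relative error is $\O(1)$. At $u=0$ this bound is attained (the density there is exactly $f_I$, so $s_j(0)=\frac{m_{j+1}}{(j+1)m_1}$), which confirms the asserted optimality and mirrors the deficit case.

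The main obstacle is precisely the convolution term $\int_0^u x^j\psi(u-x)f_I(x)\dx$, absent for the deficit: after division by $\psi(u)\sim\exp(-Ru)$, the normalizing factor $\exp(Ru)$ grows with $u$, and one must check that it is exactly absorbed rather than producing blow-up (as it does for the time-of-ruin moments). This is what the identity $\exp(Ru)\exp(-R(u-x))=\exp(Rx)$ together with the integrability in (A\ref{assump:x:exp}) achieve, keeping every piece $\O(1)$ uniformly in $u$. A heavier alternative would mirror the deficit proof, feeding a moment representation of $s_j$ through the $A$/$B$ split (with $A$ controlled by \eqref{eq:R:-i} and \eqref{eq:R:RDV:-i} via Lemma \ref{lem:R:Rtil}, and $B$ by the Taylor inequality \eqref{eq:ineq:expo:taylor} and (A\ref{assump:x:exp})); but the density-based route above is shorter and makes the non-vanishing, $\O(1)$ nature of the error manifest.
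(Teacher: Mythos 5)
Your final conclusions are the ones the paper proves, but the proof as written rests on an incorrect starting identity, and the error is not cosmetic. The function
\[
\frac{f_I(x)}{1+\t}\bigl(\indic_{x>u}+\psi(u-x)\indic_{x\le u}\bigr)
\]
is not the defective density of $U_{T-}$; it is the defective density of the first ladder height (the amount of the first drop below the initial level $u$) restricted to the event of ruin. That it integrates to $\psi(u)$ via the renewal equation \eqref{eq:psi} is necessary but not sufficient. The correct (Dickson / Lin--Willmot) defective density is
\[
\frac{f_I(x)}{\t}\Bigl[(\psi(u-x)-\psi(u))\indic_{x\le u}+(1-\psi(u))\indic_{x>u}\Bigr],
\]
which is exactly what the paper's formula for $s_j(u)$ encodes (note the prefactor $1/\t$, not $1/(1+\t)$, and the subtracted term $\frac{m_{j+1}}{(j+1)m_1\t}$). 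The two candidates agree at $u=0$ but not otherwise: for exponential claims with rate $\mu$ your representation gives $s_1(u)\to\frac{1+\t}{\mu}$ as $u\to\infty$, whereas the true limit is $\frac{2+\t}{\mu}$.

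This matters because the wrong density removes precisely the difficulty the theorem is about. With the correct representation every term carries a factor $1/\t$, and the substance of the proof is cancelling it: for the upper bound one uses $\exp(Rx)-1\le Rx\exp(Rx)$ together with $R=\O(\t)$ and (A\ref{assump:x:exp}), and for the lower bound $\exp(Rx)-1\ge Rx$ plus a case split in $u$ so that the $\O(\t)$ remainders do not destroy positivity. Your version has no $1/\t$ to fight, so both the $\O(1)$ upper bound and the uniform lower bound ``follow immediately'' --- a sign that the object being bounded is not $s_j(u)$. The heavier alternative you mention at the end (feeding the Lin--Willmot moment formula through an argument in the style of the deficit proof) is essentially what the paper does and is the route to take; the density-based shortcut only works if you start from the correct density and then reinstate the $1/\t$ cancellation argument.
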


Estimate \eqref{eq:DV:err:surplus} is confirmed by Figures \ref{fig:DV:approx:surplus} and \ref{fig:DV:err:surplus}.

\begin{figure}[!htbp]
	\centering
	\includegraphics[width=0.8\textwidth]{./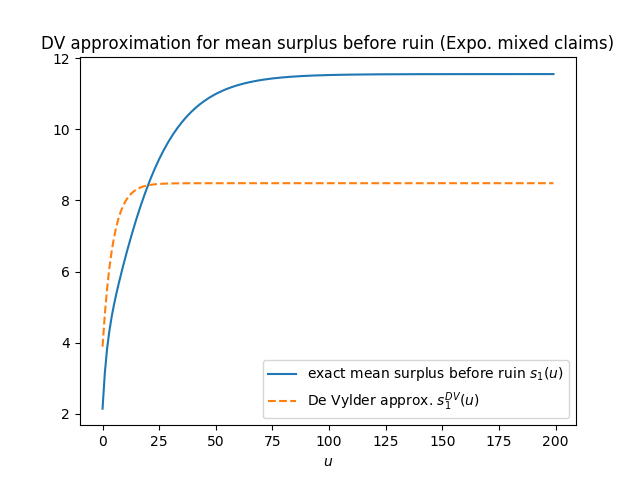}
	\caption{Exact value and De Vylder approximation for the average surplus before ruin, as functions of $u$ (with $\t=10\%$). The claims density is $a\beta_1\exp(-\beta_1x)+(1-a)\beta_2\exp(-\beta_2x)$, with $a=0.01$, $\beta_1=0.1$, $\beta_2=0.6$ (see \cite{lin:will:00} for explicit expressions).}
	\label{fig:DV:approx:surplus}
\end{figure}
\begin{figure}[!htbp]
	\centering
	\begin{subfigure}{0.8\textwidth}
		\includegraphics[width=\textwidth]{./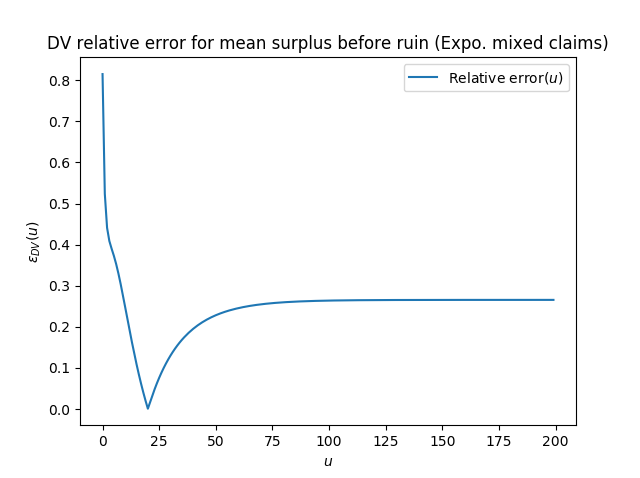}
		\caption{Relative error, as a function of $u$ (with $\t=10\%$)}
	\end{subfigure}
	\begin{subfigure}{0.8\textwidth}
		\centering
		\includegraphics[width=\textwidth]{./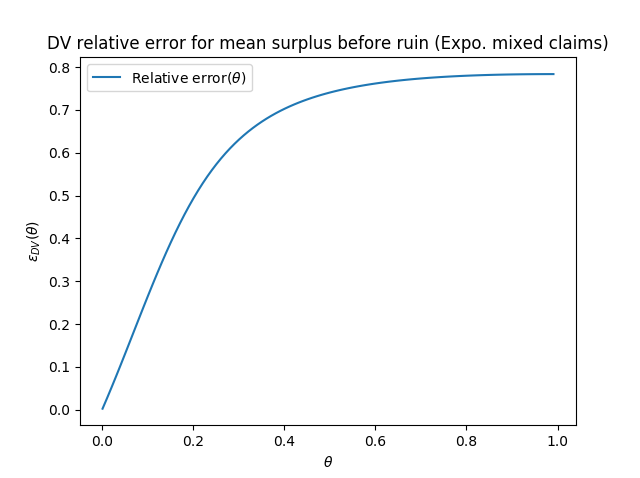}
		\caption{Relative error, as a function of $\t$ (with $u=100$)}
	\end{subfigure}
	\caption{De Vylder's approximation relative error for the average surplus before ruin. The claims density is the same as in Figure \ref{fig:DV:approx:surplus}.}
	\label{fig:DV:err:surplus}
\end{figure}

\subsection{Proof of Theorem \ref{thm:DV:err:surplus}}

\begin{proof}[Proof of Theorem \ref{thm:DV:err:surplus}]
	From \cite[identities (5.3) and (5.5)]{lin:will:00}, we know that
	\begin{align*}
		s_j(u)&=\frac{1}{\t\psi(u)}\left(\int_0^u\psi(u-x)x^jf_I(x)\dx+\int_u^\infty x^jf_I(x)\dx\right)-\frac{m_{j+1}}{(j+1)m_1\t}\\
		&=\frac{1}{\t\psi(u)}\left(\int_0^u(\psi(u-x)-\psi(u))x^jf_I(x)\dx+\int_u^\infty (1-\psi(u))x^jf_I(x)\dx\right).
	\end{align*}
	By Lemma \ref{lem:psi:exp:err}, we get
	\begin{align}
	s_j(u)&=\frac{1+\O(\t)}{\t}\left(\int_0^u(\exp(Rx)-1)x^jf_I(x)\dx+\int_u^\infty (\exp(Ru)-1-\O(\t))x^jf_I(x)\dx\right)\label{eq:sj}\\
	&\leq \frac{1+\O(\t)}{\t}\left(\int_0^\infty Rx\exp(Rx)x^jf_I(x)\dx+\int_0^\infty Rx\exp(Rx) x^jf_I(x)\dx+\O(\t)\right)\nonumber\\
	&=\frac{1+\O(\t)}{\t}\O(R+R+\O(\t))\nonumber\\
	&=\O(1+\t).\nonumber
	\end{align}
	Similarly, we have
	\begin{equation*}
	\sDV_j(u)=\O(1+\tDV)=\O(1+\t).
	\end{equation*}
Thus,
\begin{equation*}
	s_j(u)-\sDV_j(u)=\O(1+\t)=\O(1).
\end{equation*}
For a lower bound on $s_j(u)$, it is clear (by applying the inequality $\exp(Rx)-1\geq Rx$ to \eqref{eq:sj}) that
$$s_j(u)\geq\frac{1+\O(\t)}{\t}C(R\indic_{u\geq 1}+Ru\indic_{u\leq 1}+\O(\t))=C'(1+\O(\t))$$
(with positive constants $C$ and $C'$). Therefore,
\begin{equation*}
\frac{s_j(u)-\sDV_j(u)}{s_j(u)}=\O(\frac{1}{C'+\O(\t)})=\O(1).
\end{equation*}
\end{proof}

\section{Conclusion}\label{sec:concl}
One has to be careful when using De Vylder type approximations, even in a practical context. In the presence of a sufficiently small safety loading coefficient $\t$, our estimates show that the accuracy is good when approximating the ruin probability if the initial reserve $u$ is not too large, and when approximating the moments of the time of ruin if $u$ is not too small (otherwise, the relative errors blow up). However, the accuracy is generally poor when approximating the moments of either the deficit at ruin or the surplus before ruin, which is paradoxical (since De Vylder's approximation fits moments of the surplus process).\\
To summarize and illustrate once more our conclusions, Table \ref{tab:all} compares the numerical values of the relative errors of all considered De Vylder approximations, carried out on one common example of exponentially mixed claims.\\
We have not managed to derive general lower error bounds for De Vylder type approximations, which may constitute a subject for future research.
\begin{table}[!htbp]
\centering
\begin{tabular}{|c|c|c|c|c|}
	\hline
	Relative errors & for $\psi(u)$ & for $t_1(u)$  & for $d_1(u)$ & for $s_1(u)$\\
	\hline
	with $u=0$ & 7\% & 81\% & 81\% & 81\%\\
	\hline
	with $u=100$ & 4\% & 3\% & 16\% & 26\%\\
	\hline
 with $u=200$ & 14\% & 5\% & 16\% & 26\% \\
 \hline
\end{tabular}
\caption{An example of De Vylder approximation relative errors. The claims are exponentially mixed with density $a\beta_1\exp(-\beta_1x)+(1-a)\beta_2\exp(-\beta_2x)$, with $a=0.01$, $\beta_1=0.1$, $\beta_2=0.6$, and $\theta=10\%$.}
\label{tab:all}
\end{table}

\medskip

\printbibliography


\end{document}